\numberwithin{equation}{section}
\newtheorem{theorem}{Theorem}[section]
\newtheorem{lemma}[theorem]{Lemma}
\newtheorem{proposition}[theorem]{Proposition}
\newtheorem*{theorem*}{Theorem}
\newtheorem*{main*}{Main Theorem}
\theoremstyle{definition}
\newtheorem{definition}[theorem]{Definition}
\newtheorem{remark}[theorem]{Remark}
\begin{document}
\title{The Banach manifold $C^k(M,N)$}
\author{Johannes Wittmann}

\maketitle
\begin{abstract}
	Let $M$ be a compact manifold without boundary and let $N$ be a connected manifold without boundary. For each $k\in\mathbb{N}$ the set of $k$ times continuously differentiable maps between $M$ and $N$ has the structure of a smooth Banach manifold where the underlying manifold topology is the compact-open $C^k$ topology. We provide a detailed and rigorous proof for this important statement which is already partially covered by existing literature.
\end{abstract}

\tableofcontents\newpage

\section{Introduction}
Let $M$ be a closed manifold\footnote{By ``manifold'' we always mean a finite-dimensional manifold with or without boundary. All manifolds we consider are non-empty, second-countable, and Hausdorff. All manifolds considered are smooth (= $C^\infty$), unless otherwise specified. A closed manifold is a compact manifold without boundary. Moreover, in the following we use ``vector space'' for vector spaces over $\mathbb{R}$.} and let $N$ be a connected manifold without boundary. For each $k\in\mathbb{N}:=\{0,1,2,\ldots\}$ we denote by $C^k(M,N)$ the set of $k$ times continuously differentiable maps between $M$ and $N$.

It is well known that for each $k\in\mathbb{N}$ the set $C^k(M,N)$ has the structure of a smooth Banach manifold. The natural idea to turn $C^k(M,N)$ into a Banach manifold is to choose a Riemannian metric on $N$ and then use the exponential map of $N$ to construct the charts of $C^k(M,N)$. More precisely, for $g$ close enough to $f$, the map
\[C^k(M,N)\ni g\mapsto (p\mapsto (\textup{exp}_{f(p)})^{-1}g(p))\in \Gamma_{C^k}(f^*TN),\]
is a chart around $f$. Here, $\textup{exp}$ denotes the exponential map of the Riemannian manifold $N$. This idea can be found in many places in the literature (references are given below). Let us denote this chart by $\varphi_f$.

Driven by applications, there are several natural requirements and questions: One needs a rigorous and detailed proof that these charts induce a smooth structure. Are the transition maps $\varphi_f\circ (\varphi_g)^{-1}$ only smooth for $f,g\in C^\infty(M,N)$ or are they also smooth in the case that $f$ and $g$ are precisely $k$ times continuously differentiable? Is the manifold topology of $C^k(M,N)$ the compact-open $C^k$ topology?

An investigation of literature regarding such questions only brought up partial answers and proofs \cite{Eells1, Blue, Eells, Eliasson, Palais, Michor, Hamilton, MTAA, KM}. We explain this in more detail at the end of this section. Note that the case $k=\infty$ is better dealt with in the literature, in particular a very thorough treatment of the space $C^\infty(M,N)$ can be found in \cite{KM}.

In this paper we provide a detailed proof for the following theorem.

\begin{main*}[c.f. Theorem \ref{theorem ckmn banach mf}]Let $M$ be a closed manifold and let $N$ be a connected manifold without boundary. Let $k\in \mathbb{N}$ and fix a Riemannian metric on $N$. Then the set $C^k(M,N)$ endowed with the compact-open $C^k$ topology has the structure of a smooth Banach manifold with the following property: for any $f\in C^k(M,N)$ and any small enough open neighborhood $U_f$ of $f$ in $C^k(M,N)$ there is an open neighborhood $V_f$ of the zero section in $\Gamma_{C^k}(f^*TN)$ such that the map
	\begin{align*}
	\varphi_f\colon U_f&\to V_f,\\
	g&\mapsto \textup{exp}^{-1}\circ (f,g),
	\end{align*}
i.e., $\varphi_f(g)(p)=(\textup{exp}_{f(p)})^{-1}g(p)$ for all $g\in U_f$, $p\in M$, is a local chart. Here, we endow the space $\Gamma_{C^k}(f^*TN)$ of $C^k$-sections of $f^*TN$ with the usual $C^k$-norm. Note that the inverse of $\varphi_f$ is given by $$\varphi_f^{-1}(s)(p)=\textup{exp}_{f(p)}s(p)$$ for all $s\in V_f$, $p\in M$. Moreover, this smooth structure on $C^k(M,N)$ does not depend on the choice of Riemannian metric on $N$.
\end{main*}


Our detailed treatment of the proof of the Main Theorem might also be helpful for treating mapping groupoids of $C^k$-maps \cite{gr1, gr2}.

The basic strategy to prove the Main Theorem is as follows. We first show that the maps $\varphi_f\colon U_f\to V_f$ are homeomorphisms. Then we argue why the transition maps $\varphi_f\circ \varphi_g^{-1}$ given by
\[(\varphi_f\circ \varphi_g^{-1})(s)=\left((\textup{exp}_{f})^{-1}\circ\textup{exp}_{g}\right)s \]
are smooth provided that $U_f\cap U_g\neq\varnothing$. For this our arguments are inspired by \cite{Blue}. The smoothness of the transition maps is the most delicate part, and one has to argue very carefully, since $\varphi_f$ and $\varphi_g$ are defined using not necessarily smooth maps $f$ and $g$. The main input for this will be the $\Omega$-lemma (using the terminology of \cite{Blue,MTAA}) which we will first prove in a ``local'' version, see Lemma \ref{lemma loc omega}, and then ``globalize'' to maps between sections of vector bundles, see Lemma \ref{lemma glob omega}.

In \cite{Eells} the idea how the charts of $C^k(M,N)$ are constructed is outlined, it is however not included how to show that the charts are homeomorphisms or how to show smoothness of the transition maps. In \cite{Eells1} one finds details for the case $k=0$, i.e., $C^0(M,N)$, but not for general $k\in\mathbb{N}$. The notes \cite{Blue} contain details regarding the proof of the smoothness of the transition maps, however, the question whether the topology of $C^k(M,N)$ is the compact-open $C^k$ topology is not treated.

\section{Preliminaries and the local $\Omega$-lemma}
We begin by recalling some basic definitions regarding the notion of differentiability of maps between normed vector spaces that we use in the following, see e.g. \cite{MTAA}. Let $(X,\|.\|_X)$ and $(Y,\|.\|_Y)$ be normed vector spaces, $U\subset X$ open, and $f\colon U\rightarrow Y$ a map. We say that $f$ is \textit{differentiable at} $x_0\in U$ if there exists a continuous linear map $Df(x_0):=Df_{x_0}\colon X\rightarrow Y$ s.t. for every $\varepsilon >0$, there exists $\delta=\delta(\varepsilon)>0$ s.t. whenever $0<\|x-x_0\|_X<\delta$, we have
\[\frac{\|f(x)-f(x_0)-Df_{x_0}(x-x_0)\|_Y}{\|x-x_0\|_X}<\varepsilon.\]
Moreover, the map $f$ is \textit{differentiable} if $f$ is differentiable at every $x_0\in U$. We say that $f$ is \textit{continuously differentiable} if $f$ is differentiable and the map
\[Df\colon U\rightarrow L(X,Y),\qquad x\mapsto Df_x,\]
is continuous. Here, $L(X,Y)$ denotes the space of continuous linear maps $X\to Y$. Similarly, $L^k(X,Y)$ denotes the space of $k$-multilinear continuous maps $\underbrace{X\times\ldots\times X}_{k\text{ times}}\to Y$. We endow $L^k(X,Y)$ with the norm
\[\|f\|:=\sup\left\{\frac{ \|f(x_1,\ldots,x_k)\|_Y}{\|x_1\|_X\cdot\ldots\cdot \|x_k\|_X}\mid x_1,\ldots,x_k\in X\setminus\{0\}\right\}.\]
Then $L^k(X,Y)$ is a Banach space if $Y$ is a Banach space. Finally, we denote by $L^k_s(X,Y)\subset L^k(X,Y)$ the symmetric elements of $L^k(X,Y)$. Inductively, we define
\[D^kf:=D(D^{k-1}f)\colon U\rightarrow L^k(X,Y)\]
if it exists, where we have identified $L(X,L^{k-1}(X,Y))$ with $L^k(X,Y)$ via the norm-preserving isomorphism
\[L(X,L^{k-1}(X,Y))\ni f\mapsto \bigg((x_1,\ldots,x_k)\to f(x_1)(x_2,\ldots,x_k)\bigg)\in L^k(X,Y) .\] If $D^kf$ exists and is continuous, we say that $f$ is \textit{$k$ times continuously differentiable} (or \textit{$f$ is a $C^k$-map}). We use the notation
$$C^k(U,Y):=\{f\colon U\to Y\text{ }| \text{ } f \text{ is } k \text{ times continuously differentiable}\}.$$
Note that if $f\in C^k(U,Y)$, then $D^kf(x)\in L^k_s(X,Y)$ for all $x\in U$.

In the following the special case $X=\mathbb{R}^n$ will also be important. Then a map $f\colon U \to Y$ (where $U\subset\mathbb{R}^n$ is open and $(Y,\|.\|_Y)$ be a normed vector space) is continuously differentiable iff for all $j=1,\ldots,n$ and all $x_0\in U$ the limit
\[\left(\partial_{x_j}f\right)(x_0):=\lim_{h\to 0}\frac{1}{h}\left(f(x_0+he_j)-f(x_0)\right)\]
exists in $Y$ and the maps $\partial_{x_j}f\colon U\rightarrow Y$ are continuous. Let $k\in \mathbb{N}_{>0}$. Then $f$ is $k$ times continuously differentiable iff for all $j=1,\ldots,n$ the map $\partial_{x_j}f\colon U\rightarrow Y$ is continuous for $k=1$, respectively $(k-1)$ times continuously differentiable for $k\ge 2$. We define 
\begin{align*}
C^k(\overline{U},Y):=\{ f\in C^k(U,Y)\text{ }|\text{ } \partial^\alpha_xf \text{ has a continuous extension to } \overline{U} \text{ for all }|\alpha|\le k\},
\end{align*}
where $\alpha=(\alpha_1,\ldots,\alpha_n)\in\mathbb{N}^n$ is a multiindex, $\partial^\alpha_x=\partial^{\alpha_1}_{x_1}\ldots \partial^{\alpha_n}_{x_n}$, and $|\alpha|=\alpha_1+\ldots+\alpha_n$. If $U\subset \mathbb{R}^n$ is open and bounded, we define
\[\|f\|_{C^k(\overline{U},Y)}:=\max_{|\alpha|\le k}\sup_{x\in\overline{U}}\|\partial^\alpha_xf(x)\|_Y\]
for all $f\in C^k(\overline{U},Y)$.  If $(Y,\|.\|_Y)$ is a Banach space, then $(C^k(\overline{U},Y),\|.\|_{C^k(\overline{U},Y)})$ is a Banach space.

The following technical lemma will be helpful to show e.g. that the maps that will later be the charts of $C^k(M,N)$ are homeomorphisms.
\begin{lemma}\label{lemma Absch Ck norm verknuepfung} Let $U_1\subset\mathbb{R}^n$ and $W\subset\mathbb{R}^m$ be open. Let $K\subset U_1$  and $\tilde{K}\subset W$ be compact. Let $\Psi\colon W\rightarrow \mathbb{R}^l$ be a $C^k$-map and $R>0$. Let $f_1\in C^k(U_1,\mathbb{R}^m)$ with $f_1(K)\subset\tilde{K}$ and $f_1(U_1)\subset W$. Then there exists $C=C(\Psi, K,\tilde{K},R,f_1)>0$ s.t.
\begin{align*}
&\max_{|\alpha|\le k}\sup_{x\in K}\|\partial^\alpha_x(\Psi\circ f_1)(x)-\partial^\alpha_x(\Psi\circ f_2)(x)\|\\
&\le C\max_{|\alpha|\le k}\sup_{x\in K}\|\partial^\alpha_xf_1(x)-\partial^\alpha_xf_2(x)\|
\end{align*}
for all $f_2\in C^k(U_2,\mathbb{R}^m)$ with $f_2(K)\subset\tilde{K}$, $f_2(U_2)\subset W$, and
\begin{align}\label{eq qqq}
	\max_{|\alpha|\le k}\sup_{x\in K}\|\partial^\alpha_xf_1(x)-\partial^\alpha_x f_2(x)\|\le R.
\end{align} 
Moreover, $C(\Psi, K,\tilde{K},R,f_1)$ can be chosen s.t. $R\mapsto C(\Psi, K,\tilde{K},R,f_1)$ is non-decreasing.
\end{lemma}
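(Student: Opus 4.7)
The plan is to expand the derivatives of the compositions via Faà di Bruno's formula,
\[
\partial^\alpha(\Psi\circ f)(x) \;=\; \sum (\partial^\beta\Psi)(f(x))\cdot\prod_{j=1}^{|\beta|}\partial^{\gamma_j}f(x),
\]
where the sum runs over multi-indices $\beta$ with $1\le|\beta|\le|\alpha|$ and decompositions $\gamma_1+\cdots+\gamma_{|\beta|}=\alpha$ with $|\gamma_j|\ge1$ (combinatorial coefficients suppressed), plus the trivial $|\alpha|=0$ term $\Psi(f)$. The estimate then proceeds term by term via a double telescoping. As a preliminary step I would enlarge $\tilde K$ slightly: since $\tilde K\subset W$ is compact, I choose a compact $\tilde K'\subset W$ that contains a convex open neighborhood of $\tilde K$, so that for every $x\in K$ the segment $[f_1(x),f_2(x)]$ lies in $\tilde K'$ and the mean value theorem can be applied there to derivatives of $\Psi$.

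For a single Faà di Bruno summand $T_\beta(f)=(\partial^\beta\Psi)(f)\prod_j\partial^{\gamma_j}f$ I would split
\begin{align*}
T_\beta(f_1)-T_\beta(f_2) &= \bigl[(\partial^\beta\Psi)(f_1)-(\partial^\beta\Psi)(f_2)\bigr]\prod_j\partial^{\gamma_j}f_1 \\
&\quad + (\partial^\beta\Psi)(f_2)\Bigl[\prod_j\partial^{\gamma_j}f_1-\prod_j\partial^{\gamma_j}f_2\Bigr].
\end{align*}
The second summand is handled by standard multilinear telescoping: expand the bracket as a finite sum of products each containing exactly one factor $\partial^{\gamma_i}(f_1-f_2)$, with the remaining factors bounded in $L^\infty(K)$ by $\|f_1\|_{C^k(K)}+R$ (using the triangle inequality and the hypothesis $\|f_1-f_2\|_{C^k}\le R$), and the prefactor $(\partial^\beta\Psi)(f_2)$ bounded by $\|\Psi\|_{C^k(\tilde K')}$. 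For the first summand with $|\beta|\le k-1$, the derivative $\partial^\beta\Psi$ is $C^1$ on $\tilde K'$, so the MVT along $[f_1(x),f_2(x)]$ produces the bound $\|D(\partial^\beta\Psi)\|_{L^\infty(\tilde K')}\,\|f_1-f_2\|_{C^0(K)}$ times a polynomial in $\|f_1\|_{C^k(K)}$, which is in turn $\le C\|f_1-f_2\|_{C^k(K)}$.

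The main obstacle is the top-order case $|\beta|=|\alpha|=k$, in which $\partial^\beta\Psi$ is merely continuous and the naive estimate on the first summand only delivers a modulus-of-continuity bound rather than the required linear bound in $\|f_1-f_2\|_{C^k}$. My plan for this point is not to treat the $|\beta|=k$ Faà di Bruno summand in isolation but instead to use the representation $\Psi\circ f_1-\Psi\circ f_2=\int_0^1(D\Psi)(f_t)(f_1-f_2)\,dt$ with $f_t=(1-t)f_2+tf_1$ and to expand $\partial^\alpha$ of the integrand via Leibniz together with Faà di Bruno applied to $(D\Psi)\circ f_t$: the highest derivative of $\Psi$ then appearing is $D^k\Psi\circ f_t$, but always multiplied by the extra $C^k$-factor $f_1-f_2$, so the previous MVT-style bounds suffice to produce a linear estimate in $\|f_1-f_2\|_{C^k}$. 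Summing over $\alpha$ with $|\alpha|\le k$ and over Faà di Bruno terms produces a constant $C=C(\Psi,K,\tilde K,R,f_1)$ that is a polynomial expression in $\|\Psi\|_{C^k(\tilde K')}$, $\|f_1\|_{C^k(K)}$ and $R$; non-decreasing dependence on $R$ can then be enforced at the end by replacing $C$ with $R\mapsto\sup_{R'\le R}C(\cdot,R',\cdot)$ if necessary.
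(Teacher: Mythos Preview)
Your Fa\`a di Bruno expansion with multilinear telescoping is essentially the explicit unrolling of the paper's inductive chain-rule-plus-adding-zeros argument, so in spirit the two approaches coincide. You also go beyond the paper's sketch in correctly isolating the genuine obstacle at top order $|\beta|=k$, where $\partial^\beta\Psi$ is merely continuous.

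However, your proposed fix via the integral representation does not do what you claim. When you apply $\partial^\alpha$ with $|\alpha|=k$ to the integrand $(D\Psi)(f_t)\,(f_1-f_2)$ via Leibniz, the summand in which all $k$ derivatives land on $(D\Psi)\circ f_t$ (and none on $f_1-f_2$) requires, by Fa\`a di Bruno applied to $(D\Psi)\circ f_t$, the $k$-th derivative of $D\Psi$, i.e.\ $D^{k+1}\Psi$. So the highest derivative of $\Psi$ appearing is $D^{k+1}\Psi$, not $D^k\Psi$ as you assert, and you are back to needing $\Psi\in C^{k+1}$. Equivalently: the integrand is only $C^{k-1}$ in $x$ when $\Psi\in C^k$, so you cannot differentiate under the integral sign $k$ times.

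This is not really a defect of your strategy but of the stated hypothesis: the lemma with $\Psi$ merely $C^k$ is actually false. Already for $k=0$ it would assert a Lipschitz bound from continuity alone; for $k=1$, take $\Psi(y)=\tfrac{2}{3}y|y|^{1/2}$ (so $\Psi'(y)=|y|^{1/2}$), $f_1(x)=x$, $f_2(x)=x+\varepsilon$, and observe that $\sup_K|\partial_x(\Psi\circ f_1)-\partial_x(\Psi\circ f_2)|\ge\sqrt{\varepsilon}$ while $\|f_1-f_2\|_{C^1}=\varepsilon$. The paper's inductive sketch has the same hidden gap (applying the induction hypothesis to $\partial\Psi$ needs $\partial\Psi$ Lipschitz). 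The intended reading is $\Psi\in C^{k+1}$ (or $C^{k,1}$), which is harmless since in every use of the lemma in the paper $\Psi$ is built from the exponential map and trivializations and is smooth; under that hypothesis both your argument and the paper's go through.
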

\begin{proof}[Sketch of proof.] The assertion of the lemma can be shown by mathematical induction over $k$, applying the chain rule, and adding zeros. We want to illustrate the idea in the case $k=n=m=l=1$ by the following exemplary calculation:
	\begin{align*}
		\partial_x (\Psi\circ f_1)-\partial_x (\Psi\circ f_2)&=(\partial_x\Psi)\circ f_1 \cdot \partial_xf_1-(\partial_x\Psi)\circ f_2 \cdot \partial_xf_2\\
		&=(\partial_x \Psi)\circ f_1 \cdot \big(\partial_x f_1-\partial_x f_2\big) \\
		&\hphantom{=}+ \big((\partial_x\Psi)\circ f_1 - (\partial_x\Psi)\circ f_2\big)\cdot \partial_x f_2.
	\end{align*}
Now we can deal with the terms on the right hand side of the above equation by using the induction hypothesis and \eqref{eq qqq}. For higher differentiability orders and space dimensions, the calculations get more technical, but the idea stays the same. For example, in the case $k=2$ (and $n=m=l=1$) we apply the chain rule to get
\[\partial_x^2(\Psi\circ f_i)= (\partial_x^2\Psi)\circ f_i \cdot (\partial_x f_i)^2 + (\partial_x\Psi)\circ f_i \cdot \partial_x^2f_i.\]
Using this equation and adding zero, we have
\begin{align*}
	\partial_x^2(\Psi\circ f_1)-\partial_x^2(\Psi\circ f_2)=&\big((\partial_x^2\Psi)\circ f_1-(\partial_x^2\Psi)\circ f_2 \big)\cdot (\partial_xf_2)^2\\
	&+(\partial_x^2\Psi)\circ f_1 \cdot \big((\partial_x f_1)^2-(\partial_xf_2)^2\big)\\
	&+\big((\partial_x\Psi)\circ f_1 -(\partial_x\Psi)\circ f_2 \big)\cdot \partial_x^2f_1\\
	&+ (\partial_x\Psi)\circ f_2 \cdot \big(\partial_x^2f_1-\partial_x^2f_2\big).
\end{align*}
Again, we now use the induction hypothesis and \eqref{eq qqq}. (The term $(\partial_x f_1)^2-(\partial_xf_2)^2$ can be dealt with by the binomial formula and \eqref{eq qqq}.)
\end{proof}

The goal  for the remainder of this section is to state and prove the so-called (local) $\Omega$-lemma. As stated in the introduction, this lemma is the key to show that $C^k(M,N)$ carries a smooth structure. To that end, we recall the following version of Taylor's theorem. 

Suppose that $X$ is a Banach space and that $U\subset X$ is an open convex subset. An open subset $\tilde{U}\subset X\times X$ is \textit{a thickening of $U$} if
\begin{enumerate}
	\item $U\times \{0\}\subset \tilde{U}$,
	\item $u+th\in U$ for all $(u,h)\in \tilde{U}$ and $0\le t\le 1$,
	\item $(u,h)\in\tilde{U}$ implies $u\in U$.
\end{enumerate}
Note that there always exists a thickening of $U$.
\begin{lemma}[Taylor's theorem]\label{theorem taylor} Let $X$ and $Y$ be Banach spaces, $U\subset X$ open and convex,  $\tilde{U}$ a thickening of $U$. A map $f\colon U\rightarrow Y$ is $r$ times continuously differentiable if and only if there are continuous maps
\[\varphi_i\colon U\rightarrow L^i_s(X,Y), \hspace{3em} i=1,\ldots r,\]
and
\[R\colon\tilde{U}\rightarrow L^r_s(X,Y),\]
s.t. for all $(u,h)\in \tilde{U}$,
\[f(u+h)=f(u)+\left(\sum_{i=1}^r\frac{\varphi_i(u)}{i!}h^i\right)+R(u,h)h^r\]
where $h^i=(h,\ldots,h)$ ($i$ times) and $R(u,0)=0$. If $f$ is $r$ times continuously differentiable, then necessarily $\varphi_i=D^if$ for all $i=1,\ldots, r$ and in addition
\[R(u,h)=\int_{0}^{1}\frac{(1-t)^{r-1}}{(r-1)!}\left(D^rf(u+th)-D^rf(u) \right)dt\]
\end{lemma}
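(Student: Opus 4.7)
The plan is to prove both implications by induction on $r$. The forward direction (from $C^r$ to the expansion) is the classical iterated integration by parts; the converse is subtler and is reduced, one order at a time, down to the case $r=1$. For the forward direction, I set $\varphi_i := D^i f$ (continuous by hypothesis) and start from the fundamental theorem of calculus $f(u+h) - f(u) = \int_0^1 Df(u+th) h\,dt$; integrating by parts $r-1$ times, using that $-(1-t)^i/i!$ is an antiderivative of $(1-t)^{i-1}/(i-1)!$ and that $\frac{d}{dt} D^i f(u+th) h^i = D^{i+1} f(u+th) h^{i+1}$ by the chain rule, produces $f(u+h) = f(u) + \sum_{i=1}^{r-1} \tfrac{D^i f(u)}{i!} h^i + \int_0^1 \tfrac{(1-t)^{r-1}}{(r-1)!} D^r f(u+th) h^r\,dt$. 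Adding and subtracting $\tfrac{D^r f(u)}{r!} h^r = \int_0^1 \tfrac{(1-t)^{r-1}}{(r-1)!} D^r f(u) h^r\,dt$ brings this to the stated form with $R$ as displayed in the lemma; continuity of $R$ on the thickening and $R(u,0) = 0$ follow from continuity of $D^r f$ and dominated convergence.

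For the converse, the base case $r=1$ is immediate: the expansion $f(u+h) - f(u) - \varphi_1(u)h = R(u,h)h$ combined with $\|R(u,h)\|_{L(X,Y)} \to 0$ as $h \to 0$ (from continuity of $R$ and $R(u,0)=0$) forces $Df(u) = \varphi_1(u)$ at every $u$, so $f \in C^1$. For the induction step at order $r$, assuming the lemma at order $r-1$, I first repackage the order-$r$ expansion as an order-$(r-1)$ expansion for $f$ by introducing $\tilde R_{r-1}(u,h) \in L^{r-1}_s(X,Y)$ defined by contracting one slot of $\tfrac{\varphi_r(u)}{r!} + R(u,h)$ with $h$, so that $\tilde R_{r-1}(u,h)h^{r-1} = \tfrac{\varphi_r(u)}{r!}h^r + R(u,h)h^r$. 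Symmetry of $\varphi_r$ and $R$ ensures $\tilde R_{r-1}$ is $L^{r-1}_s$-valued; continuity and $\tilde R_{r-1}(u,0) = 0$ are immediate. The induction hypothesis then yields $f \in C^{r-1}$ with $D^i f = \varphi_i$ for $i = 1,\ldots,r-1$.

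It remains to upgrade to $f \in C^r$ with $D^r f = \varphi_r$. Applying the already-proved forward direction at order $r-1$ to $f$ gives an integral Taylor formula whose comparison with the given order-$r$ expansion yields
\[\int_0^1 \frac{(1-t)^{r-2}}{(r-2)!}\bigl[D^{r-1}f(u+th) - D^{r-1}f(u)\bigr] h^{r-1}\,dt = \frac{\varphi_r(u)}{r!}h^r + R(u,h)h^r.\]
The \emph{main obstacle} is to convert this identity—which only constrains the $h^{r-1}$-contraction of $D^{r-1}f(u+h) - D^{r-1}f(u)$—into the full statement $D^{r-1}f(u+h) - D^{r-1}f(u) = \varphi_r(u)h + o(\|h\|)$ in the norm of $L^{r-1}_s(X,Y)$, which is what is needed to conclude $D(D^{r-1}f)(u) = \varphi_r(u)$. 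I expect to resolve this via a scaling argument: substituting $h \mapsto \lambda h$ for $\lambda > 0$ and dividing by appropriate powers of $\lambda$ allows passing to the limit $\lambda \to 0^+$ to read off a Gâteaux-type derivative of $D^{r-1}f$ in direction $h$, and continuity of the $\varphi_i$ and $R$ together with symmetry of the multilinear forms involved (allowing polarization-type arguments to recover $L^{r-1}_s$-norm convergence from convergence of diagonal contractions) should promote this into the Fréchet derivative statement. Continuity of $\varphi_r$ then gives $f \in C^r$ and completes the induction.
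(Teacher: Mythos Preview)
The paper gives no proof of this lemma; it merely cites \cite[2.4.15 Theorem]{MTAA}. Your forward direction is correct, as is the repackaging step that reduces the converse at order $r$ to $f\in C^{r-1}$ with $D^if=\varphi_i$ for $i\le r-1$.

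The genuine gap is in your final upgrade to $f\in C^r$. After the scaling $h\mapsto\lambda h$, your comparison of remainders yields only \emph{diagonal} information: writing $A_h:=D^{r-1}f(u+h)-D^{r-1}f(u)-\varphi_r(u)(h,\cdot)\in L^{r-1}_s(X,Y)$, you obtain at best control of $A_h(h,\ldots,h)$ (or an integral average thereof), whereas Fr\'echet differentiability of $D^{r-1}f$ requires $\|A_h\|_{L^{r-1}_s}=o(\|h\|)$. Polarization cannot close this. Polarization reconstructs a \emph{fixed} symmetric $(r{-}1)$-linear form $A$ from the full diagonal function $v\mapsto A(v,\ldots,v)$; here the form $A_h$ varies with $h$ and you control its diagonal only at the single argument $v=h$. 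Since for any fixed nonzero $v$ (in a space of dimension at least two) there exist nonzero symmetric forms $A$ with $A(v,\ldots,v)=0$, there is no inequality of the type $\|A\|\le C\,\|A(v,\ldots,v)\|/\|v\|^{r-1}$, and knowledge of $A_h(h^{r-1})$ alone says nothing about $\|A_h\|$. A route that does work is to show instead that $Df=\varphi_1$ satisfies an order-$(r-1)$ Taylor expansion valued in $L(X,Y)$---obtained by comparing the given expansion at the two basepoints $u$ and $u+h$ and extracting the part linear in the secondary increment---and then to apply the induction hypothesis to $Df$; this produces an identity at the level of operators rather than merely their diagonal contractions, and avoids the trap your plan falls into.
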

A proof can be found in e.g. \cite[2.4.15 Theorem]{MTAA}.

\begin{lemma}[local $\Omega$-lemma]\label{lemma loc omega}Let $r,l\in\mathbb{N}$. Let $U\subset\mathbb{R}^n$ be open and bounded and let $V\subset\mathbb{R}^m$ be open, bounded, and convex. Moreover, let $Y$ be a Banach space and 
	\[g\colon U\times V\rightarrow Y\]
	a map s.t.
	\begin{enumerate}
		\item $g\in C^r(\overline{U\times V},Y).$
		\item For each $i\in\{0,\ldots,l\}$ the map
		\[D^i_2g\colon U\times V\rightarrow L^i_s(\mathbb{R}^m,Y),\]
		defined by $(D^i_2g)(x,y):=(D^i(g(x,.))(y)$ for all $(x,y)\in U\times V$ exists and is an element of $C^r(\overline{U\times V},L^i_s(\mathbb{R}^m,Y)).$
	\end{enumerate}
	Then the map
	\begin{align*}
	\Omega_g\colon C^r(\overline{U},V)&\rightarrow C^r(\overline{U},Y)\\
	f&\mapsto (x\mapsto g(x,f(x)))
	\end{align*}
	is an element of $C^l(C^r(\overline{U},V),C^r(\overline{U},Y))$. Here,
	\[C^r(\overline{U},V):=\{f\in C^r(\overline{U},\mathbb{R}^m)\text{ }|\text{ } f(\overline{U})\subset V\}\]
	and $C^r(\overline{U},V) \subset C^r(\overline{U},\mathbb{R}^m)$ is open. Moreover, if $l>0$, it holds that
	\begin{align}\label{eq4}
		D^i\left(\Omega_g\right)=A_i\circ \Omega_{D^i_2g}
	\end{align}
	for each $i=1,\ldots,l$, where $A_i$ is the continuous map
	\[A_i\colon  C^r(\overline{U},L^i_s(\mathbb{R}^m,Y))\rightarrow L^i_s(C^r(\overline{U},\mathbb{R}^m),C^r(\overline{U},Y))\]
	defined by
	\[\left(\left(A_i(H)\right)(h_1,\ldots,h_i)\right)(x):=(H(x))(h_1(x),\ldots,h_i(x))\]
\end{lemma}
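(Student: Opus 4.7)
The plan is to proceed by induction on $l$, with the delicate step being the first-order differentiability estimate; everything else is preliminary bookkeeping or a formal induction.

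I would first dispatch three preliminaries. First, $C^r(\overline{U},V)$ is open in $C^r(\overline{U},\mathbb{R}^m)$: any $f_0\in C^r(\overline{U},V)$ has compact image at positive distance $\delta$ from $\mathbb{R}^m\setminus V$, and $\|f-f_0\|_{C^r}<\delta$ forces $f(\overline{U})\subset V$. Second, $A_i$ is linear, and continuity (with the symmetry of $A_i(H)$ inherited from $H(x)\in L^i_s(\mathbb{R}^m,Y)$) reduces to a Leibniz estimate
\[\|A_i(H)(h_1,\ldots,h_i)\|_{C^r(\overline{U},Y)}\le C\,\|H\|_{C^r(\overline{U},L^i_s(\mathbb{R}^m,Y))}\prod_{j=1}^i\|h_j\|_{C^r(\overline{U},\mathbb{R}^m)}.\]
Third, $\Omega_g$ actually lands in $C^r(\overline{U},Y)$: by the chain rule $\partial^\alpha_x(g\circ(\mathrm{id},f))$ is a polynomial in derivatives of $g$ and of $f$, all of which extend continuously to $\overline{U}$. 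The base case $l=0$ follows directly from Lemma~\ref{lemma Absch Ck norm verknuepfung}, choosing $\tilde K$ as a small closed tubular neighborhood in $V$ of $f_0(\overline{U})$ and $\Psi$ built from $g$; this yields $\|\Omega_g(f)-\Omega_g(f_0)\|_{C^r}\le C\|f-f_0\|_{C^r}$ for $f$ close to $f_0$.

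The heart of the proof is showing that $\Omega_g$ is differentiable at each $f_0\in C^r(\overline{U},V)$ with derivative $A_1(\Omega_{D_2g}(f_0))$. For $\eta\in C^r(\overline{U},\mathbb{R}^m)$ small enough that $f_0+t\eta\in C^r(\overline{U},V)$ for all $t\in[0,1]$, Taylor's theorem (Lemma~\ref{theorem taylor}) with $r=1$ applied to $g(x,\cdot)$ gives
\[g(x,f_0(x)+\eta(x))-g(x,f_0(x))-(D_2g)(x,f_0(x))\eta(x)=R(x,f_0(x),\eta(x))\,\eta(x),\]
where $R(x,f_0(x),\eta(x))=\int_0^1\bigl[(D_2g)(x,f_0(x)+t\eta(x))-(D_2g)(x,f_0(x))\bigr]\,dt$. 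The main linear term equals $A_1(\Omega_{D_2g}(f_0))(\eta)(x)$. To estimate the remainder in $C^r$-norm I would apply Lemma~\ref{lemma Absch Ck norm verknuepfung} to $D_2g$ (with $f_1=(\mathrm{id},f_0)$ and $f_2=(\mathrm{id},f_0+t\eta)$), obtaining uniformly in $t\in[0,1]$ the bound $\|(D_2g)(\cdot,f_0+t\eta)-(D_2g)(\cdot,f_0)\|_{C^r}\le C\|\eta\|_{C^r}$; combined with the product estimate $\|F\cdot\eta\|_{C^r}\le C'\|F\|_{C^r}\|\eta\|_{C^r}$, the remainder is of order $\|\eta\|_{C^r}^2$, establishing Fr\'echet differentiability at $f_0$ and \eqref{eq4} for $i=1$.

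The inductive closure is then formal. Because $D_2^i(D_2g)=D_2^{i+1}g$, the hypotheses on $g$ with parameter $l$ specialize to the hypotheses on $D_2g$, valued in the Banach space $L^1_s(\mathbb{R}^m,Y)$, with parameter $l-1$. By the induction hypothesis, $\Omega_{D_2g}$ is $C^{l-1}$, and postcomposition with the continuous linear map $A_1$ preserves this, so $D\Omega_g=A_1\circ\Omega_{D_2g}$ is $C^{l-1}$ and thus $\Omega_g\in C^l$. Iterating the first-derivative identity and using the standard identification $L(X,L^{i-1}(X,Y))\cong L^i(X,Y)$ yields \eqref{eq4} for all $i=1,\ldots,l$. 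The main obstacle is precisely the $C^r$-bookkeeping in the remainder estimate above, and Lemma~\ref{lemma Absch Ck norm verknuepfung} is tailor-made to carry it out.
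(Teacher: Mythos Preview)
Your argument is correct, but it takes a genuinely different route from the paper's. You proceed by induction on $l$: you establish first-order differentiability explicitly (first-order Taylor plus a remainder bound via Lemma~\ref{lemma Absch Ck norm verknuepfung}) and then bootstrap through $D\Omega_g=A_1\circ\Omega_{D_2g}$, applying the induction hypothesis to $D_2g$. The paper avoids induction entirely. It writes down the \emph{full} Taylor expansion of order $l$ for $g(x,\cdot)$ with integral remainder $R$, lifts it pointwise to
\[
\Omega_g(f+h)=\Omega_g(f)+\sum_{i=1}^{l}\frac{1}{i!}\bigl(A_i\circ\Omega_{D^i_2g}\bigr)(f)\,h^i+\bigl(A_l\circ\Omega_R\bigr)(f,h)\,h^l,
\]
uses only the $l=0$ case to check that each $A_i\circ\Omega_{D^i_2g}$ and $A_l\circ\Omega_R$ is continuous, and then invokes the \emph{converse} direction of Taylor's theorem (Lemma~\ref{theorem taylor}) to conclude $\Omega_g\in C^l$ together with \eqref{eq4} for all $i$ at once. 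The paper's approach is cleaner in two respects: it never needs a quantitative remainder estimate (continuity of $\Omega_R$ suffices), and it delivers the higher-derivative formula without having to verify that iterating $A_1$ reproduces $A_i$ under the identification $L(X,L^{i-1}(X,Y))\cong L^i(X,Y)$. Your approach is more elementary in that it only uses the forward direction of Taylor's theorem. One small caveat: your $O(\|\eta\|_{C^r}^2)$ claim applies Lemma~\ref{lemma Absch Ck norm verknuepfung} with a Banach-space-valued $\Psi=D_2g$, which is a mild (and harmless) extension of its stated $\mathbb{R}^l$-valued form; in any case only $o(\|\eta\|_{C^r})$ is needed, and that already follows from the $l=0$ continuity of $\Omega_{D_2g}$.
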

The statement of Lemma \ref{lemma loc omega} can be found in different versions in \cite{Blue,MTAA, Gl}. We want to humbly point out that it is possible that \cite[3.6 Theorem]{Blue} only holds in special cases. This theorem is tied to the assumptions of the version of the local $\Omega$-lemma \cite[3.7 Theorem]{Blue}. Therefore it is possible that the assumptions of the local $\Omega$-lemma in \cite{Blue} are not ideal.

Our proof is an adapted version of \cite[Proof of 2.4.18 Proposition]{MTAA}.
\begin{proof}[Proof of Lemma \ref{lemma loc omega}] First we prove that $C^r(\overline{U},V) \subset C^r(\overline{U},\mathbb{R}^m)$ is open. Choose $f_0\in C^r(\overline{U},V)$. Since $f_0(\overline{U})$ is compact, $\mathbb{R}^m\setminus V$ is closed, and $f_0(\overline{U})\cap(\mathbb{R}^m\setminus V)=\varnothing$, we have
	\[\varepsilon :=\textup{dist}(f_0(\overline{U}),\mathbb{R}^m\setminus V)>0.\]
Now assume that $\|f-f_0\|_{C^r(\overline{U},\mathbb{R}^m)}<\varepsilon$. It follows that $\|f(x)-f_0(x)\|_Y<\varepsilon$ for all $x\in\overline{U}$. By definition of $\varepsilon$, this means $f(\overline{U})\subset V$ and so $C^r(\overline{U},V) \subset C^r(\overline{U},\mathbb{R}^m)$ is open.

In the case ``$l=0$, $r\in\mathbb{N}$'' the assertion of the lemma follows from a computation. Assume $l\in\mathbb{N}_{>0}$ and $r\in\mathbb{N}$. Let $\tilde{V}\subset \mathbb{R}^m\times\mathbb{R}^m$ be a thickening of $V$. From applying Lemma \ref{theorem taylor} to $g(x,.)$ (for $x$ fixed) it follows that for all $(y_1,y_2)\in \tilde{V}$ and all $x\in U$ we have
\begin{align}\label{eq3}
g(x,y_1+y_2)=g(x,y_1)+\left(\sum_{i=1}^{l}\frac{1}{i!}(D^i_2g)(x,y_1)y_2^i\right)+R(x,y_1,y_2)y_2^l
\end{align}
where the map
\[R\colon U\times\tilde{V} \rightarrow L^l_s(\mathbb{R}^m,Y)\]
is given by
\[R(x,y_1,y_2)=\int_{0}^{1}\frac{(1-t)^{l-1}}{(l-1)!}\left(D^l_2g(x,y_1+ty_2)-D^l_2g(x,y_1) \right)dt.\]
Define \[F(t,x,y_1,y_2):=\frac{(1-t)^{l-1}}{(l-1)!}\left(D^l_2g(x,y_1+ty_2)-D^l_2g(x,y_1)\right).\]
From ii) it follows that
\[F\in C^r(\overline{(0,1)\times U\times\tilde{V}}, L^l_s(\mathbb{R}^m,Y)).\]
By differentiating under the integral it follows that
\[R\in C^r(\overline{U\times\tilde{V}},L^l_s(\mathbb{R}^m,Y)).\]
Since we already proved the case ``$l=0$, $r\in\mathbb{N}$'' we see that
\begin{align*}
\Omega_R\colon C^r(\overline{U},\tilde{V})&\rightarrow C^r(\overline{U},L^l_s(\mathbb{R}^m,Y)),\\
h&\mapsto (x\mapsto R(x,h(x))),
\end{align*}
is continuous. In particular,
\[\tilde{R}:=A_l\circ\Omega_R\colon C^r(\overline{U},\tilde{V})\rightarrow L^l_s(C^r(\overline{U},\mathbb{R}^m),C^r(\overline{U},Y))\]
is continuous. Analogously, we see that
\[\widetilde{\Omega_{D^i_2g}}:=A_i\circ\Omega_{D^i_2g}\colon C^r(\overline{U},V)\rightarrow L^i_s(C^r(\overline{U},\mathbb{R}^m),C^r(\overline{U},Y))\]
is continuous for $i=1,\ldots,l$.
From $\eqref{eq3}$ it follows that for all $(f,h)\in C^r(\overline{U},\tilde{V})$ we have
\[\Omega_g(f+h)=\Omega_g(f)+\left(\sum_{i=1}^{l} \frac{1}{i!}\widetilde{\Omega_{D^i_2g}}(f)h^i\right)+\tilde{R}(f,h)h^l.\]
From Lemma \ref{theorem taylor} we conclude that $\Omega_g\in C^l(C^r(\overline{U},V),C^r(\overline{U},Y))$ and
\[D^i\left(\Omega_g\right)=\widetilde{\Omega_{D^i_2g}}=A_i\circ \Omega_{D^i_2g}\]
for $i=1,\ldots,l$. (Here we used that $C^r(\overline{U},\tilde{V})$, viewed as a subset of $C^r(\overline{U},\mathbb{R}^m)\times C^r(\overline{U},\mathbb{R}^m)$, is a thickening of $C^r(\overline{U},V)$.)
\end{proof}

\section{The topological space $C^k(M,N)$}
In this section we recall the definitions of the compact-open $C^k$ topology on $C^k(M,N)$ and the $C^k$-norm on sections of vector bundles. We try to be as precise as possible when stating these definitions, so that no confusion arises when we use them later in technical proofs. Then we show that the maps which will be the charts of $C^k(M,N)$ are homeomorphisms.

The following definition is taken from \cite[Chapter 2]{Hir}.

\begin{definition}[compact-open $C^k$ topology] Let $M$ and $N$ be manifolds without boundary and $k\in\mathbb{N}$. For $f\in C^k(M,N)$, charts $(\varphi,U)$ and $(\psi,V)$ of $M$ and $N$, respectively, $K\subset U$ compact with $f(K)\subset V$ and $\varepsilon >0$ we define the set
\begin{align*}
\mathcal{N}^k(f,\varphi,U,\psi&,V,K,\varepsilon):=\{g\in C^k(M,N)\text{ }|\text{ }g(K)\subset V\text{, }\\
&\max_{|\alpha|\le k}\sup_{x\in\varphi(K)}\|\partial^\alpha_x(\psi\circ g\circ \varphi^{-1})(x)-\partial^\alpha_x(\psi\circ f\circ \varphi^{-1})(x)\|< \varepsilon\}
\end{align*}
where $\|.\|$ denotes the Euclidean norm. The  \textit{compact-open $C^k$ topology} (or \textit{weak topology}) \textit{on $C^k(M,N)$} is the topology generated by the set
\begin{align*}
\{\mathcal{N}^k(f,\varphi,U,\psi,V,K,\varepsilon)\text{ }|\text{ }f\in C^k(M,N),\text{ } (\varphi,U) \text{ and }(\psi,V) \text{ charts of } M \text{ and } N,\\
\text{ respectively, } K\subset U \text{ compact with } f(K)\subset V \text{, } \varepsilon >0\}
\end{align*}
as a subbasis.
\end{definition}
From now on, we always assume $C^k(M,N)$ to be equipped with the compact-open $C^k$ topology. The topological space $C^k(M,N)$ is secound-countable and metrizable \cite[p. 35]{Hir}. In particular, it is Hausdorff.

We will use the following lemma later.
\begin{lemma}\label{lemma weak top properties} Assume $M$ is closed. Let $f\in C^k(M,N)$, $k\in\mathbb{N}$, $(\varphi_i,U_i)$ and $(\psi_i,V_i)$ charts of $M$ and $N$ respectively, $K_i\subset U_i$ compact with $f(K_i)\subset V_i$, $i=1,\ldots r$, and $\bigcup_{i=1}^rK_i=M$. Then the set 
		\begin{align*}
				\{\bigcap_{i=1}^r\mathcal{N}^k(f,\varphi_i,U_i,\psi_i,V_i,K_i,\varepsilon)\text{ }|\text{ }\varepsilon >0\}
		\end{align*}
		is a neighborhood basis of $f$.
		In particular, a sequence $(f_m)_{m\in\mathbb{N}}\subset C^k(M,N)$ converges to $f$ in $C^k(M,N)$ iff for all $\varepsilon>0$ there exists some $N=N(\varepsilon)$ s.t. for all $m\ge N(\varepsilon)$ it holds that $f_m\in \bigcap_{i=1}^r\mathcal{N}^k(f,\varphi_i,U_i,\psi_i,V_i,K_i,\varepsilon)$.
\end{lemma}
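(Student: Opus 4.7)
The plan is to establish the neighborhood basis assertion; the sequence characterization then follows immediately because $C^k(M,N)$ is metrizable, hence first-countable, so convergence is equivalent to eventual membership in every element of a fixed neighborhood basis at $f$. Each set $\bigcap_{i=1}^{r}\mathcal{N}^k(f,\varphi_i,U_i,\psi_i,V_i,K_i,\varepsilon)$ is visibly an open neighborhood of $f$, so the content lies in showing that every open neighborhood of $f$ contains one such. A generic open neighborhood contains a finite intersection of subbasic sets $\mathcal{N}^k(f,\tilde\varphi_j,\tilde U_j,\tilde\psi_j,\tilde V_j,\tilde K_j,\tilde\varepsilon_j)$ (each containing $f$); by taking the minimum over $j$ of the $\varepsilon$'s that work for each, it suffices to handle a single such subbasic set, which I fix and call $\mathcal{N}^k(f,\tilde\varphi,\tilde U,\tilde\psi,\tilde V,\tilde K,\tilde\varepsilon)$.

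The argument then splits into a $C^0$-part and a derivative part. For the $C^0$-part, the goal is $g(\tilde K)\subset\tilde V$; decompose $\tilde K=\bigcup_i(\tilde K\cap K_i)$, observe that $\psi_i(f(\tilde K\cap K_i))$ is a compact subset of the open set $\psi_i(\tilde V\cap V_i)\subset\mathbb{R}^{\dim N}$, hence has a positive Euclidean distance $\delta_i$ from its complement, and conclude that $\varepsilon<\min_i\delta_i$ combined with $g(K_i)\subset V_i$ and $\|\psi_i\circ g-\psi_i\circ f\|<\varepsilon$ on $K_i$ forces $g(p)\in V_i\cap\tilde V\subset\tilde V$ for every $p\in\tilde K\cap K_i$. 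For the derivative part, on each piece $\tilde K\cap K_i$ I would use the chart-change identity
\[\tilde\psi\circ g\circ\tilde\varphi^{-1}=(\tilde\psi\circ\psi_i^{-1})\circ(\psi_i\circ g\circ\varphi_i^{-1})\circ(\varphi_i\circ\tilde\varphi^{-1}),\]
valid on an open neighborhood of $\tilde\varphi(\tilde K\cap K_i)$ once $\varepsilon$ is small enough (using the $C^0$-step to keep $\psi_i\circ g\circ\varphi_i^{-1}$ inside the domain of the target chart change). Writing $A:=\tilde\psi\circ\psi_i^{-1}$ and $B:=\varphi_i\circ\tilde\varphi^{-1}$ (both smooth with uniformly bounded derivatives on the relevant compacta) and $F_h:=\psi_i\circ h\circ\varphi_i^{-1}$, the post-composition with $A$ is precisely the content of Lemma \ref{lemma Absch Ck norm verknuepfung} applied with $\Psi=A$, $f_1=F_f$, $f_2=F_g$, while the pre-composition with $B$ contributes only a fixed chain-rule constant depending on derivatives of $B$ on a fixed compact set. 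This yields
\[\max_{|\alpha|\le k}\sup_{x\in\tilde\varphi(\tilde K\cap K_i)}\|\partial^\alpha_x(\tilde\psi\circ g\circ\tilde\varphi^{-1})-\partial^\alpha_x(\tilde\psi\circ f\circ\tilde\varphi^{-1})\|\le C_i\varepsilon,\]
and choosing $\varepsilon<\tilde\varepsilon/\max_i C_i$, $\varepsilon<\min_i\delta_i$, and less than the $R$ allowed in Lemma \ref{lemma Absch Ck norm verknuepfung} completes the desired inclusion.

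The main obstacle is the careful bookkeeping for Lemma \ref{lemma Absch Ck norm verknuepfung} across the two chart changes: one must arrange the open sets $U_1,W$ and compacta $K,\tilde K$ in the hypothesis so that the inclusions $F_g(U_1)\subset W$ and $F_g(K)\subset\tilde K$ hold \emph{uniformly} in $g$ across the admissible range determined by $\varepsilon$ and a pre-chosen $R$, which is precisely why the $C^0$-step is carried out first (it pins down compact targets inside $\psi_i(\tilde V\cap V_i)$ on which $A$ has uniformly controlled derivatives). The fact that $\tilde K$ need not lie in any single $K_i$ is handled by the splitting $\tilde K=\bigcup_i(\tilde K\cap K_i)$ and the corresponding finite maximum over $i$; everything else---openness of the claimed basis sets, the subbasis-to-basis reduction, and the sequence consequence via first-countability---is routine.
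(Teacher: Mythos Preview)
Your proposal is correct and follows essentially the same route as the paper's proof: reduce to a single subbasic neighborhood, split $\tilde K$ along the $K_i$, first secure $g(\tilde K)\subset\tilde V$ via a positive-distance argument in the $\psi_i$-charts, then control the derivatives by the chart-change identity together with Lemma~\ref{lemma Absch Ck norm verknuepfung} for the post-composition and a chain-rule constant for the pre-composition. The paper proceeds identically (it even flags the pre-composition step as a separate variant of Lemma~\ref{lemma Absch Ck norm verknuepfung}), so there is nothing to add.
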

\begin{proof} We have to show the following: If an arbitrary $\mathcal{N}^k(f,\varphi,U,\psi,V,K,\varepsilon)$ is given, then there exists some $\delta>0$ s.t. 
	\[\bigcap_{i=1}^r\mathcal{N}^k(f,\varphi_i,U_i,\psi_i,V_i,K_i,\delta)\subset\mathcal{N}^k(f,\varphi,U,\psi,V,K,\varepsilon).\]
	To that end, assume that $K_i\cap K\neq\varnothing$. Since the complement $\psi_i(V_i\cap V)^\complement$ is closed, $\psi_i(f(K_i\cap K))$ is compact, and $\psi_i(V_i\cap V)^\complement\cap\psi_i(f(K_i\cap K))=\varnothing$ we have
	\[\delta_i:=\mathrm{dist}(\psi_i(V_i\cap V)^\complement,\psi_i(f(K_i\cap K)))>0.\]
	Now choose an arbitrary $\delta$ with
	\[0<\delta\le\frac{1}{2}\min\{\delta_i\text{ }|\text{ } i\in\{1,\ldots,r\} \text{ and } K_i\cap K\neq \varnothing\}\]
	and let \[g\in\bigcap_{i=1}^r\mathcal{N}^k(f,\varphi_i,U_i,\psi_i,V_i,K_i,\delta).\] We show $g(K)\subset V$. Since $g(K_i)\subset V_i$ and because the $K_i$ cover $M$, it is sufficient to show $g(K_i\cap K)\subset V_i\cap V$ whenever $K_i\cap K\neq \varnothing$.
	To that end, assume $K_i\cap K\neq \varnothing$. From $g\in \mathcal{N}^k(f,\varphi_i,U_i,\psi_i,V_i,K_i,\delta)$ it follows that
	\[\max_{|\alpha|\le k}\sup_{x\in\varphi_i(K_i\cap K)}\|\partial^\alpha_x(\psi_i\circ g\circ \varphi^{-1})(x)-\partial^\alpha_x(\psi_i\circ f \circ \varphi^{-1})(x)\|<\delta.\]
	In particular, that means that for each $p\in K_i\cap K$ we have $\psi_i(g(p))\in B_{\delta}(\psi_i(f(p)))$. From the definition of $\delta$ it follows that for all $p\in K_i\cap K$ we have $B_{\delta}(\psi_i(f(p)))\subset\psi_i(V_i\cap V)$. It follows that $\psi_i(g(K_i\cap K))\subset \psi_i(V_i\cap V)$ and thus $g(K_i\cap K))\subset V_i\cap V$. We have shown $g(K)\subset V$. Using Lemma \ref{lemma Absch Ck norm verknuepfung}\footnote{For $f_1=\psi_i\circ f \circ \varphi_i^{-1}$ defined on $\varphi_i(U_i\cap U\cap f^{-1}(V_i\cap V))$,  $f_2=\psi_i\circ g\circ\varphi_i^{-1}$ defined on $\varphi_i(U_i\cap U\cap g^{-1}(V_i\cap V))$, $\Psi=\psi\circ \psi_i^{-1}$ defined on $\psi_i(V_i\cap V)$, and $\tilde{K}=\overline{ B_\delta(\psi_i(f(K_i\cap K)))}\subset \psi_i(V_i\cap V)$.} (and a version of Lemma \ref{lemma Absch Ck norm verknuepfung} that estimates pre-composition with diffeomorphisms rather than post-composition with maps, for details see \cite[Lemma 3.2.1 i)]{JWDissertation}) we calculate
	\begin{align*}
	&\max_{|\alpha|\le k}\sup_{x\in\varphi(K)}\|\partial^\alpha_x(\psi\circ g\circ \varphi^{-1})(x)-\partial^\alpha_x(\psi\circ f\circ \varphi^{-1})(x)\|\\
	&=\max_{i=1,\ldots,l}\max_{|\alpha|\le k}\sup_{x\in\varphi(K_i\cap K)}\|\partial^\alpha_x(\psi\circ g\circ \varphi^{-1})(x)-\partial^\alpha_x(\psi\circ f\circ \varphi^{-1})(x)\|\\
	&=\max_{i=1,\ldots,l}\max_{|\alpha|\le k}\sup_{x\in\varphi(K_i\cap K)}\|\partial^\alpha_x(\psi\circ \psi_i^{-1}\circ \psi_i\circ g\circ \varphi_i^{-1}\circ\varphi_i\circ \varphi^{-1})(x)\\
	&\hspace{3cm}-\partial^\alpha_x(\psi\circ\psi_i^{-1}\circ \psi_i\circ f\circ\varphi_i^{-1}\circ\varphi_i\circ \varphi^{-1})(x)\|\\
	&\le\max_{i=1,\ldots,l}\left(C_i\max_{|\alpha|\le k}\sup_{x\in\varphi_i(K_i\cap K)}\|\partial^\alpha_x(\psi_i\circ g\circ \varphi_i^{-1})(x)-\partial^\alpha_x(\psi_i\circ f\circ \varphi_i^{-1})(x)\|\right)\\
	&\le \left(\max_{i=1,\ldots,l}C_i\right)\delta.
	\end{align*}
	Now we choose $\delta$ so small that $\left(\max_{i=1,\ldots,l}C_i\right)\delta<\varepsilon$. This finishes the proof.
\end{proof}

\begin{definition}[$C^k$-norm on sections of a vector bundle]\label{def C^k norm vrb} Let $M$ be a closed manifold. Let $\pi\colon E\to M$ be a $C^k$ vector bundle. Pick charts $(U_i,\varphi_i)$ of $M$, $i=1,\ldots,l$, $\bigcup_{i=1}^lU_i=M$ s.t. $\overline{U_i}\subset M$ is compact, $\overline{U_i}\subset \tilde{U_i}$,  $(\tilde{U_i},\varphi_i)$ is still a chart of $M$ and there are local trivializations $(\hat{U}_i,\Phi_i)$ of $E$ with $\overline{U_i}\subset\hat{U}_i$ for each $i=1,\ldots,l$. For $k\in\mathbb{N}$ let
\[\Gamma_{C^k}(E):=\{s\colon M \to E\text{ }|\text{ } s\in C^k(M,E) \text{  and } \pi\circ s=id_M \}\]
be the space of $C^k$-sections of $E$. Define the \textit{$C^k$-norm on $\Gamma_{C^k}(E)$} by
\[\|s\|_{C^k}:=\|s\|_{\Gamma_{C^k}(E)}:=\max_{i=1,\ldots, l}\max_{|\alpha|\le k} \sup_{x\in\overline{\varphi_i(U_i)}}\|\partial^\alpha_x(pr_2\circ\Phi_i\circ s\circ \varphi_i^{-1})\|\]
for $s\in\Gamma_{C^k}(E)$.

Note that $(\Gamma_{C^k}(E),\|.\|_{C^k})$ is a Banach space. Up to equivalence of norms, $\|.\|_{C^k}$ does not depend on the choices made in its definition.

\end{definition}

For the definition of the charts of $C^k(M,N)$ the exponential map of $N$ is the main input. For the convenience of the reader and to fix notation we recall some basic facts about the exponential map of a Riemannian manifold. In the following we denote the bundle projection of $TN$ by $\pi_{TN}\colon TN\to N$.

\begin{lemma}\label{lemma eig exp}Let $(N,h)$ be a Riemannian manifold. Define $\mathcal{E}\subset TN$ by
\[\mathcal{E}:=\{v\in TN\text{ }|\text{ } \textup{exp}_{\pi_{TN}(v)}v \text{ exists}\}.\]
\begin{enumerate}
\item $\mathcal{E}\subset TN$ is open and 
\[\textup{exp}\colon \mathcal{E}\rightarrow N\]
defined by $\textup{exp}(v):=\textup{exp}_{\pi_{TN}(v)}v$ is smooth.
\item  Define the smooth map
\[E:=(\pi_{TN},\textup{exp})\colon \mathcal{E}\rightarrow N\times N\]
by $E(v):=(\pi_{TN}(v),\textup{exp}_{\pi_{TN}(v)}v)$. For each $p\in N$ there exists a neighborhood $W$ of $0_p$ (where $0_p$ denotes the zero-element in $T_pN$) in $TN$ s.t. the map
\[E\colon W\rightarrow E(W)\]
is a diffeomorphism (in particular $E(W)$ is open in $N\times N$).
\item For all $p\in N$ and  $0<\delta<\textup{inj}_p(N)$ where $\textup{inj}_p(N)>0$ is the injectivity radius of $N$ at $p$ it holds that
\[\textup{exp}_p\colon B_{\delta}(0_p)\rightarrow B_\delta(p)\]
is a diffeomorphism where $B_{\delta}(0_p)=\{v\in T_pM\text{ }|\text{ }\|v\|_h:=\delta \}$, $B_\delta(p)=\{q\in N\text{ }|\text{ }d(p,q)<\delta\}$, and $d$ is the distance function induced by $h$.
\end{enumerate}
\end{lemma}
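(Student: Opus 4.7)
My plan is to prove the three parts in order, using standard ODE theory for (i), the inverse function theorem for (ii), and the Gauss lemma for (iii).

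For (i), the strategy is to realize $\exp$ as the time-$1$ map associated to the geodesic flow on $TN$. The geodesic equation on $N$ lifts to a smooth first-order ODE on $TN$ generated by the geodesic spray. Standard results on smooth dependence of ODE solutions on initial conditions give a smooth flow $\phi\colon \mathcal{D}\to TN$ defined on an open set $\mathcal{D}\subset\mathbb{R}\times TN$ containing $\{0\}\times TN$, with $\mathcal{D}$ the maximal domain of definition. By definition, $\mathcal{E}=\{v\in TN\mid (1,v)\in\mathcal{D}\}$, which is a slice of an open set and therefore open, and $\textup{exp}(v)=\pi_{TN}(\phi(1,v))$ is a composition of smooth maps, hence smooth.

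For (ii), I would apply the inverse function theorem to $E$ at $0_p$. The tangent space $T_{0_p}(TN)$ decomposes as a direct sum of the vertical subspace $V:=T_{0_p}(T_pN)\cong T_pN$ (tangent to the fiber) and the horizontal subspace $H:=dZ_p(T_pN)\cong T_pN$, where $Z\colon N\to TN$ is the smooth zero section; these are transverse and together span $T_{0_p}(TN)$ by dimension count. On $V$, the projection $\pi_{TN}$ collapses the fiber, so $d\pi_{TN}$ vanishes, while $\textup{exp}$ restricted to $T_pN$ equals $\textup{exp}_p$, whose derivative at $0_p$ is the identity of $T_pN$ by the classical computation. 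On $H$, both $\pi_{TN}\circ Z=\mathrm{id}_N$ and $\textup{exp}\circ Z=\mathrm{id}_N$ hold, so both differentials act as the identity. In the block decomposition $V\oplus H\to T_pN\oplus T_pN$ (first factor $d\pi_{TN}$, second factor $d\textup{exp}$), the differential $dE_{0_p}$ therefore has the form $\bigl(\begin{smallmatrix}0&I\\ I&I\end{smallmatrix}\bigr)$, which is invertible. The inverse function theorem then produces an open neighborhood $W$ of $0_p$ in $TN$ on which $E$ restricts to a diffeomorphism onto the open set $E(W)\subset N\times N$.

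For (iii), the content is mostly to reconcile the definition of $\textup{inj}_p(N)$ with the statement. By the usual definition (or characterization) of the injectivity radius, for every $0<\delta<\textup{inj}_p(N)$ the map $\textup{exp}_p$ restricts to a diffeomorphism from the open ball $B_\delta(0_p)\subset T_pN$ onto its image. What remains is to identify this image with the metric ball $B_\delta(p)$. The Gauss lemma implies that radial geodesics inside a normal neighborhood of $p$ are length-minimizing, so $\textup{exp}_p(B_\delta(0_p))\subset B_\delta(p)$; conversely, any point $q$ with $d(p,q)<\delta$ can be joined to $p$ by a curve of length less than $\delta$, and using that $\textup{exp}_p$ is a radial diffeomorphism on $B_\delta(0_p)$ together with the Gauss lemma shows that $q$ must already lie in $\textup{exp}_p(B_\delta(0_p))$, giving the reverse inclusion.

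The main obstacle is really (iii): the various definitions of the injectivity radius in the literature are not always phrased identically, and turning the diffeomorphism onto its image into a diffeomorphism onto the metric ball is the one place where we genuinely need the Gauss lemma rather than just ODE/IFT arguments. Parts (i) and (ii) are standard consequences of the smoothness of the geodesic flow and a direct derivative computation.
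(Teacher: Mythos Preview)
The paper does not actually prove this lemma: it is introduced with the sentence ``For the convenience of the reader and to fix notation we recall some basic facts about the exponential map of a Riemannian manifold'' and is stated without proof. Your argument is correct and is precisely the standard textbook route (geodesic spray and smooth dependence on initial data for (i), the inverse function theorem with the block computation of $dE_{0_p}$ for (ii), and the Gauss lemma for (iii)), so there is nothing to compare against.
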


Now we define the maps that will later be the charts of $C^k(M,N)$ and show that they are homeomorphisms.
\begin{lemma}\label{lemma charts c^kMN are homeo}Let $k\in\mathbb{N}$. Let $M$ and $N$ be manifolds without boundary. Let $M$ be compact and let $N$ be connected. Choose a Riemannian metric $h$ on $N$. Define
	\[U_{f,\varepsilon}:=\bigcap_{i=1}^l\mathcal{N}^k(f,\varphi_i,\tilde{U}_i,\psi_i,V_i,\overline{U_i},\varepsilon)\]
for $(U_i,\varphi_i)$ charts of $M$, $i=1,\ldots,l$, $\bigcup_{i=1}^lU_i=M$, s.t. $\overline{U_i}\subset M$ is compact, $\overline{U_i}\subset\tilde{U}_i$, $(\tilde{U}_i,\varphi_i)$ is still a chart of $M$ and charts $(V_i,\psi_i)$ of $N$ with $f(\overline{U_i})\subset V_i$ for each $i=1,\ldots,l$, $\varepsilon>0$.
Define the map
\[\varphi_f\colon U_{f,\varepsilon}\rightarrow \varphi_f(U_{f,\varepsilon})\subset \Gamma_{C^k}(f^*TN)\]
by
\[(\varphi_f(g))(p):=(\textup{exp}_{f(p)})^{-1}g(p)\]
for all $p\in M$, where $\textup{exp}$ is the exponential map of $(N,h)$. Then it holds that
\begin{enumerate}
	\item For every $\delta>0$ there exists $\varepsilon >0$ s.t. for all $g\in U_{f,\varepsilon}$ and all $p\in M$ we have 
	\begin{align*}
	d(g(p),f(p))<\delta.
	\end{align*}
	In particular, $\varphi_f$ is well-defined on $U_{f,\varepsilon(\delta)}$ for $\delta< \inf_{p\in M}\textup{inj}_{f(p)}(N)$.
\end{enumerate}
Moreover, for $\varepsilon>0$ small enough the following is true:
\begin{enumerate}
	\item[ii)] $\varphi_f\colon U_{f,\varepsilon}\rightarrow \varphi_f(U_{f,\varepsilon})$ is continuous (where on $U_{f,\varepsilon}$ we have the subspace topology induced from the compact-open $C^k$ topology and on $\varphi_f(U_{f,\varepsilon})$ we have the subspace topology induced from the $C^k$-norm on $\Gamma_{C^k}(f^*TN)$).
	\item[iii)] $\varphi_f(U_{f,\varepsilon})\subset \Gamma_{C^k}(f^*TN)$ is open. 
	\item[iv)] $\varphi_f^{-1}\colon \varphi_f(U_{f,\varepsilon})\rightarrow U_{f,\varepsilon}$ is continuous.
\end{enumerate}
\end{lemma}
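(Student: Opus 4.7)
The plan is to reduce each of the four claims to the technical estimate Lemma \ref{lemma Absch Ck norm verknuepfung} combined with Lemma \ref{lemma weak top properties}. The geometric input is that by Lemma \ref{lemma eig exp}, $E=(\pi_{TN},\textup{exp})$ is a diffeomorphism from a neighbourhood $W$ of the zero section in $TN$ onto a neighbourhood of the diagonal in $N\times N$, so on $E(W)$ there is a smooth map $F\colon(q_1,q_2)\mapsto (\textup{exp}_{q_1})^{-1}q_2$. By refining the cover $\{\tilde U_i\}$ if needed, I may further assume that $f^*TN|_{\tilde U_i}$ is trivial via a chosen $\Phi_i$ and that $f(\overline{\tilde U_i})\subset V_i$; in this setup every pointwise formula built from $F$, $\textup{exp}$, $\psi_i$, $\varphi_i$, $\Phi_i$, $f$ becomes a single smooth map on a fixed open subset of Euclidean space, to which Lemma \ref{lemma Absch Ck norm verknuepfung} applies with constants independent of the map being plugged in.

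For (i), the pullback of $d$ to $\psi_i(V_i)$ is bounded by a constant multiple of the Euclidean distance on each compact subset of $\psi_i(V_i)$. Since $g\in U_{f,\varepsilon}$ forces $\|\psi_i(g(p))-\psi_i(f(p))\|<\varepsilon$ for $p\in\overline{U_i}$, a first shrinking of $\varepsilon$ keeps $g(\overline{U_i})$ inside a fixed compact neighbourhood of $f(\overline{U_i})$ in $V_i$, and a second one then yields $d(g(p),f(p))<\delta$ uniformly. Compactness of $M$ and continuity of $p\mapsto\textup{inj}_{f(p)}(N)$ give $\inf_{p\in M}\textup{inj}_{f(p)}(N)>0$, and well-definedness follows from Lemma \ref{lemma eig exp} iii).

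For (ii) and (iv), I write
\[\textup{pr}_2\circ\Phi_i\circ\varphi_f(g)\circ\varphi_i^{-1}(x)=G_i\bigl(x,\psi_i\circ g\circ\varphi_i^{-1}(x)\bigr)\]
and
\[\psi_i\circ\varphi_f^{-1}(s)\circ\varphi_i^{-1}(x)=H_i\bigl(x,\textup{pr}_2\circ\Phi_i\circ s\circ\varphi_i^{-1}(x)\bigr)\]
for smooth $G_i,H_i$ built from $F$, $\textup{exp}$, $\Phi_i$, $\psi_i$, $f$ on the uniform open set fixed above. Lemma \ref{lemma Absch Ck norm verknuepfung} converts $C^k$ closeness of $g$ to $f$ in charts into $C^k$ closeness of $\varphi_f(g)$ to the zero section in trivializations, and vice versa, with constants uniform over $g\in U_{f,\varepsilon}$ and $s$ in a neighbourhood of $\varphi_f(g)$. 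Combined with Lemma \ref{lemma weak top properties}, which detects convergence in $C^k(M,N)$ precisely through these local norms over $\overline{U_i}$, this yields continuity of both $\varphi_f$ and $\varphi_f^{-1}$.

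For (iii), fix $s_0=\varphi_f(g_0)$ with $g_0\in U_{f,\varepsilon}$, so that each local $C^k$ norm of $\psi_i\circ g_0\circ\varphi_i^{-1}-\psi_i\circ f\circ\varphi_i^{-1}$ is strictly less than $\varepsilon$ by some slack $\eta>0$. For $s$ in a small $\|\cdot\|_{C^k}$-ball around $s_0$, the representation from (iv) together with Lemma \ref{lemma Absch Ck norm verknuepfung} gives
\[\|\psi_i\circ\varphi_f^{-1}(s)\circ\varphi_i^{-1}-\psi_i\circ g_0\circ\varphi_i^{-1}\|_{C^k}\le C\|s-s_0\|_{\Gamma_{C^k}},\]
so by the triangle inequality $\varphi_f^{-1}(s)\in U_{f,\varepsilon}$ as soon as $C\|s-s_0\|_{\Gamma_{C^k}}<\eta$; shrinking the radius further to ensure $\varphi_f^{-1}(s)(\overline{U_i})\subset V_i$ via (i), the whole ball lies in $\varphi_f(U_{f,\varepsilon})$. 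The main obstacle throughout is the bookkeeping: choosing a single cover, trivializations and compact domains so that all smooth maps $F$, $G_i$, $H_i$ are defined on a fixed compact set containing the relevant data for every $g$ and $s$ under consideration, so that the constants from Lemma \ref{lemma Absch Ck norm verknuepfung} really are uniform.
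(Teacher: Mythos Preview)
Your proposal is correct and follows essentially the same route as the paper: both reduce everything to Lemma \ref{lemma Absch Ck norm verknuepfung} applied to local representatives built from $E^{-1}$, $\exp$, the charts and trivializations, with Lemma \ref{lemma weak top properties} handling the passage between an arbitrary cover and one satisfying the extra geometric assumptions.

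Two small remarks. First, your $G_i$ and $H_i$ are not smooth but only $C^k$, because you have absorbed the $C^k$-map $f$ into them; this is harmless since Lemma \ref{lemma Absch Ck norm verknuepfung} only requires $\Psi$ to be $C^k$. The paper instead keeps $f$ inside the maps $f_1,f_2$ and lets $\Psi$ be the genuinely smooth part $pr_2\circ\hat\Phi_i\circ E|_{W_i}^{-1}\circ(\psi_i^{-1}\times\psi_i^{-1})$, which is just a different packaging of the same estimate. Second, for iii) the paper avoids your direct ball argument: it extends $\varphi_f^{-1}$ to the map $H(s)(p)=\exp_{f(p)}s(p)$ on the larger set $U=\{s\in\Gamma_{C^k}(f^*TN):\|s(p)\|_h<\delta\text{ for all }p\}$, shows $U$ is open and $H$ is continuous on all of $U$, and then obtains openness for free via $\varphi_f(U_{f,\varepsilon})=H^{-1}(U_{f,\varepsilon})$. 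This sidesteps the bookkeeping in your last paragraph (ensuring $\varphi_f^{-1}(s)(\overline{U_i})\subset V_i$ before you can even write down the chart expression), though your argument can certainly be made to work as well.
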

\begin{proof} We start by mentioning that since $C^k(M,N)$ and $\Gamma_{C^k}(f^*TN)$ are first-countable, it is sufficient to show that $\varphi_f$ and $\varphi_f^{-1}$ are sequentially continuous. To make the proofs of i) and ii) easier, we first choose the $V_i$ s.t.
	\[(A)\left\{ \begin{array}{l}  \psi_i(V_i) \text{ is convex and compact, }\overline{V_i}\subset\tilde{V}_i \text{ where } (\tilde{V}_i,\psi_i) \text{ is still a chart of }N\text{,}\\\tilde{V}_i\times \tilde{V}_i\subset E(W_i), \text{where }W_i\subset TN\text{ and}\\ E(W_i)\subset N\times N\text{ are open s.t. } \\E\colon W_i\rightarrow E(W_i) \text{ is a diffeomorphism},\\ (\tilde{V}_i, \hat{\Phi}_i) \text{ are local trivializations of } TN \text{  with induced local}\\\text{trivialization } (f^{-1}(\tilde{V_i}),\Phi_i) \text{  of } f^*TN \text{ for each } i=1,\ldots l. \end{array} \right.\]
	(See Lemma \ref{lemma eig exp} ii).)

In the following we prove i) and ii) with the additional assumption $(A)$ and then show afterwards that we don't need it, provided that $\varepsilon>0$ is small enough.

\textbf{Proof of i):} It is not difficult to see that for every $\delta>0$ there exists $\varepsilon>0$ s.t. for all $g\in U_{f,\varepsilon}$ and all $p\in M$ we have
\[d(g(p),f(p))<\delta.\]
Choosing $\delta<\inf_{p\in M}\textup{inj}_{f(p)}(N)$ we have that $(\textup{exp}_{f(p)})^{-1}g(p)$ exists for each $p\in M$ and all $g\in U_{f,\varepsilon(\delta)}$. Moreover, $\varphi_f(g)\in\Gamma_{C^k}(f^*TN)$, since on $U_i$ it holds that $\varphi_f(g)=(E|_{W_i})^{-1}\circ(f,g)$. We have shown that $\varphi_f$ is a well-defined map.

\textbf{Proof of ii):} Choose $\varepsilon$ so small that $\varphi_f$ is well-defined on $U_{f,\varepsilon}$, see i). Let $(g_m)_{m\in \mathbb{N}}$ be a sequence in $U_{f,\varepsilon}$, $g\in U_{f,\varepsilon}$ with $g_m\xrightarrow{m\to\infty}g$ in $U_{f,\varepsilon}$. In particular, for each $r>0$ there exists $N=N(r)\in\mathbb{N}$ s.t. \[g_m\in\bigcap_{i=1}^l\mathcal{N}^k(g,\varphi_i,\tilde{U}_i,\psi_i,V_i,\overline{U_i},r)\]
for all $m\ge N$. (We note that the $\varphi_i,\tilde{U}_i,\psi_i,V_i,U_i$ are the same as in the statement of the lemma where we additionally assume $(A)$ as mentioned above.) That means, that for all $i=1,\ldots,l$ we have
\[\|\psi_i\circ g_m\circ \varphi_i^{-1}-\psi_i\circ g\circ \varphi_i^{-1}\|_{C^k(\overline{\varphi_i(U_i)},\mathbb{R}^n)}\xrightarrow{m\to\infty}0\]
where $n=\textup{dim}(N)$. Using Lemma \ref{lemma Absch Ck norm verknuepfung} 
\footnote{For $f_1=(\psi_i\times \psi_i)\circ (f,g)\circ \varphi_i^{-1}$ defined on $\varphi_i(\tilde{U}_i\cap f^{-1}(\tilde{V}_i)\cap g^{-1}(\tilde{V}_i))$, $f_2=(\psi_i\times \psi_i)\circ (f,g_m)\circ \varphi_i^{-1}$ defined on $\varphi_i(\tilde{U}_i\cap f^{-1}(\tilde{V}_i)\cap g_m^{-1}(\tilde{V}_i))$, and for $\Psi=pr_2\circ\hat{\Phi}_i\circ E|_{W_i}^{-1}\circ(\psi_i^{-1}\times\psi_i^{-1})$, defined on $\psi_i(\tilde{V}_i)\times\psi_i(\tilde{V}_i)$, $K=\overline{\varphi_i(U_i)}$, and $\tilde{K}=\psi_i(\overline{V_i})\times\psi_i(\overline{V_i})$.} we calculate for each $i=1,\ldots,l$
\begin{align*}
&\|pr_2\circ\Phi_i\circ\varphi_f(g_m)\circ\varphi_i^{-1}-pr_2\circ\Phi_i\circ\varphi_f(g)\circ\varphi_i^{-1}\|_{C^k(\overline{\varphi_i(U_i)},\mathbb{R}^n)}\\
&=\|pr_2\circ\hat{\Phi}_i\circ E|_{W_i}^{-1}\circ(f,g_m)\circ\varphi_i^{-1}\\
&\hphantom{=}-pr_2\circ\hat{\Phi}_i\circ E|_{W_i}^{-1}\circ(f,g)\circ\varphi_i^{-1}\|_{C^k(\overline{\varphi_i(U_i)},\mathbb{R}^n)}\\
&=\|\left(pr_2\circ\hat{\Phi}_i\circ E|_{W_i}^{-1}\circ(\psi_i^{-1}\times\psi_i^{-1})\right)\circ\left((\psi_i\times\psi_i)\circ(f,g_m)\circ\varphi_i^{-1}\right)\\
&\hphantom{=}-\left(pr_2\circ\hat{\Phi}_i\circ E|_{W_i}^{-1}\circ(\psi_i^{-1}\times\psi_i^{-1})\right)\circ\left((\psi_i\times\psi_i)\circ(f,g)\circ\varphi_i^{-1}\right)\|_{C^k(\overline{\varphi_i(U_i)},\mathbb{R}^n)}\\
&\le C_i\|(\psi_i\times\psi_i)\circ(f,g_m)\circ\varphi_i^{-1}-(\psi_i\times\psi_i)\circ(f,g)\circ\varphi_i^{-1}\|_{C^k(\overline{\varphi_i(U_i)},\mathbb{R}^n\times\mathbb{R}^n)}\\
&=C_i\|\psi_i\circ g_m\circ\varphi_i^{-1}-\psi_i\circ g\circ\varphi_i^{-1}\|_{C^k(\overline{\varphi_i(U_i)},\mathbb{R}^n)}\xrightarrow{n\to\infty}0,
\end{align*}
i.e.,
\[\|\varphi_f(g_m)-\varphi_f(g)\|_{C^k}\xrightarrow{m\to\infty}0.\]
Hence, $\varphi_f\colon U_{f,\varepsilon}\rightarrow \varphi_f(U_{f,\varepsilon})$ is continuous.

We have shown i) and ii) under the additional assumption $(A)$. Now we show that we don't need the assumption $(A)$, provided that $\varepsilon>0$ is small enough. To that end, choose $(U_i',\varphi_i')$ charts of $M$, $i=1,\ldots,m$, $\bigcup_{i=1}^mU_i'=M$, s.t. $\overline{U_i'}\subset M$ is compact, $\overline{U_i'}\subset\tilde{U}_i'$, $(\tilde{U}_i',\varphi_i')$ is still a chart of $M$ and charts $(V_i',\psi_i')$ of $N$ with $f(\overline{U_i'})\subset V_i'$ for each $i=1,\ldots,m$. Using Lemma \ref{lemma weak top properties} we choose $\varepsilon'>0$ s.t.
\[\bigcap_{i=1}^m\mathcal{N}^k(f,\varphi_i',\tilde{U}_i',\psi_i',V_i',\overline{U_i}',\varepsilon')\subset \bigcap_{i=1}^l\mathcal{N}^k(f,\varphi_i,\tilde{U}_i,\psi_i,V_i,\overline{U_i},\varepsilon)\]
where the $\varphi_i,\tilde{U}_i,\psi_i,V_i,U_i$ are the same as in the statement of the lemma and satisfy $(A)$.
Since $\varphi_f$ is well-defined and continuous on the set $\bigcap_{i=1}^l\mathcal{N}^k(f,\varphi_i,\tilde{U}_i,\psi_i,V_i,\overline{U_i},\varepsilon)$ (that is what we have shown above) it is obviously well-defined and continuous on the subset $\bigcap_{i=1}^m\mathcal{N}^k(f,\varphi_i',\tilde{U}_i',\psi_i',V_i',\overline{U_i}',\varepsilon')$.

\textbf{Proof of iii) and iv):} Choose $\varepsilon$ so small that $\varphi_f$ is well-defined on $U_{f,\varepsilon}$ and ii) holds. From Lemma \ref{lemma eig exp} iii) we see that $\varphi_f(U_{f,\varepsilon})\subset U:=\{s\in\Gamma_{C^k}(f^*TN)\text{ }|\text{ }\|s(p)\|_h<\delta\text{  for all }p\in M\}$. First we prove that $U$ is open in $\Gamma_{C^k}(f^*TN)$. To that end, let $s_0\in U$. Since the function $M\rightarrow \mathbb{R}$, $p\mapsto \|s_0(p)\|_h$, is continuous and $M$ is compact, we have $\delta_0:=\max_{p\in M}\|s_0(p)\|_h<\delta$. Comparing $h$ to the Euclidean norm in the trivialization it is easy to verify that there exists $C>0$ s.t.
\[\|s(p)-s_0(p)\|_h\le C\|s-s_0\|_{C^k}\]
for all $s\in\Gamma_{C^k}(f^*TN)$ and all $p\in M$. Choose $r>0$ s.t. $Cr<\delta-\delta_0$. If $\|s-s_0\|_{C^k}<r$, then 
\[\|s(p)\|_h\le \|s(p)-s_0(p)\|_h+\|s_0(p)\|_h\le C\|s-s_0\|_{C^k} +\delta_0 < Cr+\delta_0<\delta\]
for all $p\in M$, therefore $U$ is open in $\Gamma_{C^k}(f^*TN)$.

Next we show that the well-defined map
\[H\colon U\rightarrow C^k(M,N),\]
$(H(s))(p):=\textup{exp}_{f(p)}s(p)$ is continuous. Then we have in particular that $\varphi_f^{-1}=H|_{\varphi_f(U_{f,\varepsilon})}$ is continuous and that $\varphi_f(U_{f,\varepsilon})=H^{-1}(U_f)$ is open in $U$ (and therefore also in $\Gamma_{C^k}(f^*TN)$).

To show continuity of $H$, choose charts $(U_i,\varphi_i)$ of $M$, $i=1,\ldots,l$, $\bigcup_{i=1}^lU_i=M$ s.t. $\overline{U_i}\subset M$ is compact, $\overline{U_i}\subset \tilde{U_i}$,  $(\tilde{U_i},\varphi_i)$ is still a chart of $M$ and there are local trivializations $(\tilde{U}_i,\Phi_i)$ of $f^*TN$ and charts $(V_i,\psi_i)$ of $N$ with $f(\overline{U_i})\subset V_i$ and $(B_{\delta}(V_i),\psi_i)$ is still a chart of $N$ for each $i=1,\ldots,l$, where $B_{\delta}(V_i)=\{p\in N\text{ }|\text{ } \exists q\in V_i:\text{ }d(p,q)<\delta\}$. (Note that the $\varphi_i,\tilde{U}_i,\psi_i,V_i,U_i$ here don't need to be the same as in the statement of the lemma.)

Let $(s_m)_{m\in \mathbb{N}}$ be a sequence in $U$, $s\in U$, with 
\[\|s_m-s\|_{C^k}\xrightarrow{m\to\infty}0,\]
i.e.,
\begin{align*}
\|pr_2\circ \Phi_i\circ s_m\circ\varphi_i^{-1}-pr_2\circ\Phi_i\circ s\circ\varphi_i^{-1}\|_{C^k(\overline{\varphi_i(U_i)},\mathbb{R}^n)}\xrightarrow{m\to\infty}0
\end{align*}
for each $i=1,\ldots,l$. For showing $H(s_m)\xrightarrow{m\to\infty}H(s)$ in $C^k(M,N)$ it is sufficient to show that for all $r >0$ there exists $N=N(r)\in\mathbb{N}$ s.t. 
\[H(s_m)\in\bigcap_{i=1}^l\mathcal{N}^k(H(s),\varphi_i,\tilde{U}_i,\psi_i,B_\delta(V_i),\overline{U_i},r)\] 
for all $m\ge N$, see Lemma \ref{lemma weak top properties}. First of all, by definition of $H$ and Lemma \ref{lemma eig exp} iii) it holds that 
\[d(H(s_m)(p),f(p))<\delta \text{ for all }m\in\mathbb{N}\text{ and }d(H(s)(p),f(p))<\delta\]
for each $p\in M$. Since $f(\overline{U_i})\subset V_i$ it follows that $(H(s_m)(\overline{U_i})\subset B_\delta(V_i)$ and $(H(s)(\overline{U_i})\subset B_\delta(V_i)$ for each $m\in\mathbb{N}$ and $i=1,\ldots,l$. Let $r >0$. Using Lemma \ref{lemma Absch Ck norm verknuepfung} 
\footnote{For $f_1=(\varphi_i\times id)\circ \Phi_i\circ s\circ \varphi_i^{-1}$ defined on $\varphi_i(\tilde{U}_i\cap f^{-1}(B_\delta(V_i))$, $f_2=(\varphi_i\times id)\circ \Phi_i\circ s_m\circ \varphi_i^{-1}$ also defined on $\varphi_i(\tilde{U}_i\cap f^{-1}(B_\delta(V_i)))$,  $\Psi=\psi_i\circ f^*\textup{exp}\circ \Phi_i^{-1}\circ (\varphi_i^{-1}\times id)$ defined on $(\varphi_i\times id)\circ \Phi_i\left(\{v\in f^*TN\text{ }|\text{ }\|v\|<\delta \}\cap f^*TN|_{\tilde{U}_i\cap f^{-1}(B_\delta(V_i))}\right)$, $K=\overline{\varphi_i(U_i)}$, and $\tilde{K}=(\varphi_i\times id)\circ \Phi_i\left(\{v\in f^*TN\text{ }|\text{ }\|v\|\le\delta \}\cap f^*TN|_{\overline{U}_i\cap f^{-1}(\overline{V_i})}\right)$.}
we calculate for each $i=1,\ldots,l$  and $m$ large enough 
\begin{align*}
&\|\psi_i\circ H(s_m)\circ\varphi_i^{-1}-\psi_i\circ H(s)\circ\varphi_i^{-1}\|_{C^k(\overline{\varphi_i(U_i)},\mathbb{R}^n)}\\
&=\|\psi_i\circ f^*\textup{exp}\circ s_m\circ\varphi_i^{-1}-\psi_i\circ f^*\textup{exp}\circ s\circ\varphi_i^{-1}\|_{C^k(\overline{\varphi_i(U_i)},\mathbb{R}^n)}\\
&=\|\left(\psi_i\circ f^*\textup{exp}\circ\Phi_i^{-1}\circ(\varphi_i^{-1}\times id)\right)\circ\left((\varphi_i\times id)\circ\Phi_i\circ s_m\circ\varphi_i^{-1}\right)\\
&-\left(\psi_i\circ f^*\textup{exp}\circ\Phi_i^{-1}\circ(\varphi_i^{-1}\times id)\right)\circ\left((\varphi_i\times id)\circ\Phi_i\circ s\circ\varphi_i^{-1}\right)\|_{C^k(\overline{\varphi_i(U_i)},\mathbb{R}^n)}\\
&\le C_i\|(\varphi_i\times id)\circ\Phi_i\circ s_m\circ\varphi_i^{-1}-(\varphi_i\times id)\circ\Phi_i\circ s\circ\varphi_i^{-1}\|_{C^k(\overline{\varphi_i(U_i)},\mathbb{R}^n\times\mathbb{R}^n)}\\
&=C_i\|pr_2\circ\Phi_i\circ s_m\circ\varphi_i^{-1}-pr_2\circ\Phi_i\circ s\circ\varphi_i^{-1}\|_{C^k(\overline{\varphi_i(U_i)},\mathbb{R}^n)}\\
&<r,
\end{align*}
where $(f^*\textup{exp})(v):=\textup{exp}_{f(p)}v$ for $v\in(f^*TN)_p$, $p\in M$. We have shown \[H(s_m)\in\bigcap_{i=1}^l\mathcal{N}^k(H(s),\varphi_i,\tilde{U}_i,\psi_i,B_\delta(V_i),\overline{U_i},r)\]
for $m$ large enough, so $H\colon U\rightarrow C^k(M,N)$ is continuous.
\end{proof}

\section{The smooth structure on $C^k(M,N)$}
 In the following we ``globalize'' the local $\Omega$-lemma (Lemma \ref{lemma loc omega}) to sections of vector bundles. This will be the main input for showing that $C^k(M,N)$ carries a \textit{smooth} structure. 
 
 We start with a proposition that provides a criterion for a map with target $\Gamma_{C^k}(E)$ to be a $C^r$-map.

\begin{proposition}\label{proposition 1} In the situation of Definition \ref{def C^k norm vrb}, we define
	\[R_i\colon\Gamma_{C^k}(E)\rightarrow C^k(\overline{\varphi_i(U_i)},\mathbb{R}^n)\]
by $R_i(s):=pr_2\circ\Phi_i\circ s\circ\varphi_i^{-1}$ for $i=1,\ldots,l$, where we assume that $\textup{rank}(E)=n$. Let $r\in\mathbb{N}$, $X$ a Banach space, $U\subset X$ open, and $$F\colon U\rightarrow \Gamma_{C^k}(E)$$ a map. Then $F\in C^r(U,\Gamma_{C^k}(E))$ if and only if $R_i\circ F\in C^r (U,C^k(\overline{\varphi_i(U_i)},\mathbb{R}^n))$ for $i=1,\ldots,l$.
\end{proposition}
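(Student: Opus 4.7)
The plan is to repackage the $R_i$ into a single map and reduce everything to the interaction of differentiability with isometries and finite products. Set
\[\tilde{R}:=(R_1,\ldots,R_l)\colon \Gamma_{C^k}(E)\to W:=\prod_{i=1}^l C^k(\overline{\varphi_i(U_i)},\mathbb{R}^n),\]
equipped with the maximum of the $C^k$-norms on its factors. By Definition \ref{def C^k norm vrb}, $\tilde{R}$ is norm-preserving, i.e., a linear isometric embedding. In particular each $R_i$ is continuous linear, and the image $Z:=\tilde{R}(\Gamma_{C^k}(E))\subset W$ is a closed linear subspace because $\Gamma_{C^k}(E)$ is complete.

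The ``only if'' direction is immediate: composing a $C^r$-map with a continuous linear map yields a $C^r$-map, so $F\in C^r(U,\Gamma_{C^k}(E))$ forces each $R_i\circ F\in C^r$. For the converse, suppose each $R_i\circ F$ is $C^r$. Since $C^r$-differentiability into the finite product $W$ is equivalent to componentwise $C^r$-differentiability, this immediately gives $\tilde{R}\circ F\in C^r(U,W)$, and the task is to recover $F$ itself as a $C^r$-map into $\Gamma_{C^k}(E)$.

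I would do this by induction on $r$. For $r=0$ continuity of $F$ follows directly from that of $\tilde{R}\circ F$ via the isometry. For $r\ge 1$, fix $u\in U$ and $h\in X$: the difference quotients $(F(u+th)-F(u))/t$ are Cauchy in $\Gamma_{C^k}(E)$ as $t\to 0$, because their $\tilde{R}$-images converge in $W$ to $D(\tilde{R}\circ F)(u)h$ and $\tilde{R}$ is norm-preserving. Completeness of $\Gamma_{C^k}(E)$ then produces a limit $DF(u)h$ satisfying $\tilde{R}(DF(u)h)=D(\tilde{R}\circ F)(u)h$, and linearity in $h$, continuity, and the Fr\'echet remainder estimate for $DF(u)$ all transfer from those of $D(\tilde{R}\circ F)(u)$ via the isometry and injectivity of $\tilde{R}$. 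To close the induction one observes that post-composition with $\tilde{R}$ yields an entirely analogous norm-preserving linear embedding $L(X,\Gamma_{C^k}(E))\hookrightarrow L(X,W)$, again with closed image, so the same criterion applied to $DF$ reduces $C^{r-1}$-smoothness of $DF$ to the already known $C^{r-1}$-smoothness of $D(\tilde{R}\circ F)$. The only real piece of bookkeeping is precisely this verification that the isometry-onto-closed-subspace structure is preserved at the level of operator spaces, which is routine.
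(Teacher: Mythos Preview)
Your argument is correct and takes a genuinely different route from the paper. Both handle the forward direction identically (the $R_i$ are continuous linear), but for the converse they diverge.

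The paper works locally: it introduces auxiliary Banach spaces $\Gamma_{C^k,\overline{U}_i}(E)$ of sections over $U_i$, identifies $R_i\circ F$ with the restriction map $F_i\colon x\mapsto F(x)|_{U_i}$, and then explicitly \emph{patches} the local derivatives $D^jF_i$ over the cover $\{U_i\}$ to produce a global candidate $\tilde D^jF$ with values in $L^j_s(X,\Gamma_{C^k}(E))$. The inductive burden is to check that these patches agree on overlaps, are continuous, and actually compute $D^jF$.

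You instead observe that the tuple $\tilde R=(R_1,\ldots,R_l)$ is, by the very definition of $\|\cdot\|_{C^k}$, a linear isometric embedding of $\Gamma_{C^k}(E)$ into the product $W$, with closed image by completeness. The converse then becomes an instance of the general Banach-space fact that isometric embeddings detect $C^r$-smoothness, which you prove by induction on $r$ via the induced isometry $L(X,\Gamma_{C^k}(E))\hookrightarrow L(X,W)$ at the inductive step. This sidesteps all patching and overlap checks. The paper's route is more explicit about the local form of $D^jF$ (and that explicitness is used downstream when reading off $D\Omega_f$ in trivializations), while yours isolates the functional-analytic core and would apply verbatim whenever a Banach space is presented through a finite family of norm-preserving linear maps. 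One cosmetic remark: your Cauchy step can be shortened by noting directly that the limit $D(\tilde R\circ F)(u)h$ lies in the closed image $Z$ and hence already has a unique $\tilde R$-preimage to which the quotients converge by the isometry; but what you wrote is correct as it stands.
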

\begin{proof}[Sketch of proof]``$\Rightarrow$:'' The $R_i$ are linear and continuous, so the are smooth.\\
	``$\Leftarrow$:'' To make things easier, we first get rid of the $\Phi_i$ and $\varphi_i$ in $R_i\circ F$ as follows: On the vector space
	\[\Gamma_{C^k,\overline{U}_i}(E):=\{s\colon U_i\rightarrow E\text{ }|\text{ } s\in\Gamma_{C^k}(E|_{U_i})\text{ and } pr_2\circ\Phi_i\circ s\circ \varphi_i^{-1}\in C^k(\overline{\varphi_i(U_i)},\mathbb{R}^n)\}\]
	we define the norm
	\[\|s\|_i:=\|pr_2\circ\Phi_i\circ s\circ\varphi_i^{-1}\|_{C^k(\overline{\varphi_i(U_i)},\mathbb{R}^n)}.\]
	We get isomorphisms of Banach spaces
	\begin{align*}
	J_i\colon \Gamma_{C^k,\overline{U}_i}(E)&\rightarrow C^k(\overline{\varphi_i(U_i)},\mathbb{R}^n)\\
	s&\mapsto pr_2\circ\Phi_i\circ s\circ\varphi_i^{-1},
	\end{align*}
	with $J_i^{-1}(f)=\Phi_i^{-1}(id_{U_i},f\circ\varphi_i)$. By assumption, we have that 
	\begin{align*}
	F_i:=J_i^{-1}\circ R_i\circ F\colon U&\rightarrow \Gamma_{C^k,\overline{U}_i}(E),\\
	x&\mapsto F(x)|_{U_i}
	\end{align*}
	is an element of $C^r(U,\Gamma_{C^k,\overline{U}_i}(E))$ for $i=1,\ldots,l$. Define
	\[\tilde{D}^jF\colon U\rightarrow L^j_s(X,\Gamma_{C^k}(E))\]
	by
	\[\left(\tilde{D}^jF\right)_u(x_1,\ldots,x_j)|_{U_i}:=\left(D^jF_i\right)_u(x_1,\ldots,x_j)\]
	for $u\in U$, $x_1,\ldots,x_j\in X$, $j=1,\ldots,r$.
	
	Inductively, one can show that $\tilde{D}^jF$ is well-defined, continuous, and $F$ is $r$ times continuously differentiable with $D^jF=\tilde{D}^jF$ for $j=1,\ldots,r$. Details can be found in \cite[Proof of Proposition 3.4.1.]{JWDissertation}.
\end{proof}

\begin{lemma}[global $\Omega$-lemma] \label{lemma glob omega}Let $r,k\in\mathbb{N}$. Let $M$ be a closed manifold of dimension $m$. Let $E\to M$ be a $C^k$ vector bundle of rank $n$, and let $h$ be a bundle metric on $E$. Choose $U_i,\tilde{U}_i,\hat{U}_i,\varphi_i,\Phi_i$, $i=1,\ldots,l$ as in Definition \ref{def C^k norm vrb} and s.t. the $\Phi_i$ are isometries on the fibers. Let $\delta>0$ and define the open subset $U\subset E$ by
\[U:=\{v\in E\text{ }|\text{ }\|v\|_h<\delta\}.\]
Let $F\to M$ be a $C^k$ vector bundle of rank $d$ with local trivializations $(\hat{U}_i,\tilde{\Phi}_i)$, $i=1,\ldots,l$, and 
\[f\colon U\rightarrow F\]
a map s.t.
\begin{enumerate}
\item $f$ is fiber-preserving and
\item the maps
\[g_i\colon \varphi_i(U_i)\times B_\delta(0)\rightarrow \mathbb{R}^d\]
defined by $$g_i(x,v):=\left(pr_2\circ\tilde{\Phi}_i\circ f\circ\Phi_i^{-1}\circ (\varphi_i^{-1},id)\right)(x,v)$$
for $i=1,\ldots,l$ and $B_\delta(0)\subset\mathbb{R}^n$ the open ball in $\mathbb{R}^n$ of radius $\delta$ and center $0$, satisfy  $$g_i\in C^k(\overline{\varphi_i(U_i)\times B_\delta(0)},\mathbb{R}^d)$$ and for each $j=0,\ldots,r$ the map
\[D^j_2g_i\colon\varphi_i(U_i)\times B_\delta(0)\rightarrow L^j_s(\mathbb{R}^m,\mathbb{R}^d)\]
defined by $(D^j_2g_i)(x,y):=(D^j(g_i(x,.)))(y)$ for all $(x,y)\in\varphi_i(U_i)\times B_\delta(0)$ exists and is an element of $C^k(\overline{\varphi_i(U_i)\times B_\delta(0)},L^j_s(\mathbb{R}^m,\mathbb{R}^d))$.
\end{enumerate}
Then the map
\begin{align*}
\Omega_f\colon \Gamma_{C^k}(E)^U&\rightarrow \Gamma_{C^k}(F),\\
s&\mapsto f\circ s,
\end{align*}
is an element of $C^r(\Gamma_{C^k}(E)^U,\Gamma_{C^k}(F))$ where $\Gamma_{C^k}(E)^U\subset\Gamma_{C^k}(E)$ is the open subset of $C^k$-sections of $E$ with image contained in $U$. If $r\ge 1$, then
\begin{align}\label{eq2}
\left(\left(D\Omega_f\right)_{s_0}s\right)(p)=(D(f|_{E_p\cap U}))_{s_0(p)}s(p)
\end{align}
for all $p\in M$, $s_0\in\Gamma_{C^k}(E)^U$, and all $s\in\Gamma_{C^k}(E)$.
\end{lemma}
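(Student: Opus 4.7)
The strategy is to reduce to the local $\Omega$-lemma (Lemma \ref{lemma loc omega}) via the criterion from Proposition \ref{proposition 1}. As a preliminary step I would verify that $\Gamma_{C^k}(E)^U \subset \Gamma_{C^k}(E)$ is open. This follows the same pattern as in the proof of Lemma \ref{lemma charts c^kMN are homeo} iii): since the $\Phi_i$ are fibrewise isometries, there is a constant $C>0$ with $\sup_{p\in M}\|s(p)-s_0(p)\|_h \le C\|s-s_0\|_{C^k}$, so the open condition $\sup_p\|s(p)\|_h<\delta$ is stable under small $C^k$-perturbations.

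Next I apply Proposition \ref{proposition 1} to the target bundle $F$, using the charts $(\tilde{U}_i,\varphi_i)$ of $M$ and the trivialisations $(\hat{U}_i,\tilde{\Phi}_i)$ of $F$. Writing $R_i^F(\sigma):= pr_2\circ\tilde{\Phi}_i\circ\sigma\circ\varphi_i^{-1}$ for $\sigma\in\Gamma_{C^k}(F)$, it suffices to show that each composition $R_i^F\circ\Omega_f\colon \Gamma_{C^k}(E)^U\to C^k(\overline{\varphi_i(U_i)},\mathbb{R}^d)$ is of class $C^r$. Define
\[T_i\colon \Gamma_{C^k}(E)\to C^k(\overline{\varphi_i(U_i)},\mathbb{R}^n),\qquad T_i(s):=pr_2\circ\Phi_i\circ s\circ\varphi_i^{-1},\]
which is continuous and linear, hence smooth. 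Using that $f$ is fibre-preserving and that $s(\varphi_i^{-1}(x))=\Phi_i^{-1}(\varphi_i^{-1}(x),T_i(s)(x))$, one computes
\[R_i^F(\Omega_f(s))(x)=g_i(x,T_i(s)(x))=\Omega_{g_i}(T_i(s))(x),\]
so that $R_i^F\circ\Omega_f=\Omega_{g_i}\circ T_i$. Because the $\Phi_i$ are fibrewise isometries, $T_i$ maps $\Gamma_{C^k}(E)^U$ into $C^k(\overline{\varphi_i(U_i)},B_\delta(0))$. Hypothesis ii) on $g_i$ is exactly the hypothesis of Lemma \ref{lemma loc omega} with the parameters $r$ and $l$ of that lemma playing the roles of our $k$ and $r$; hence $\Omega_{g_i}\in C^r$, and composing with the smooth linear map $T_i$ gives $R_i^F\circ\Omega_f\in C^r$. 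Proposition \ref{proposition 1} then yields $\Omega_f\in C^r(\Gamma_{C^k}(E)^U,\Gamma_{C^k}(F))$.

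For the derivative formula \eqref{eq2} I would apply the chain rule together with $D\Omega_{g_i}=A_1\circ\Omega_{D^1_2 g_i}$ from Lemma \ref{lemma loc omega}. This yields, for $s_0\in\Gamma_{C^k}(E)^U$, $s\in\Gamma_{C^k}(E)$ and $p\in U_i$,
\[\bigl(R_i^F\bigl((D\Omega_f)_{s_0}s\bigr)\bigr)(\varphi_i(p))=(D^1_2g_i)\bigl(\varphi_i(p),T_i(s_0)(\varphi_i(p))\bigr)\bigl(T_i(s)(\varphi_i(p))\bigr).\]
Since the fibrewise model of $f$ over $p$ in the trivialisations is $v\mapsto g_i(\varphi_i(p),v)$, the right-hand side is precisely the expression in the $F$-trivialisation of $(D(f|_{E_p\cap U}))_{s_0(p)}(s(p))$, which proves \eqref{eq2} (the right-hand side is independent of $i$, as required).

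The main obstacle I anticipate is the index bookkeeping: identifying $R_i^F\circ\Omega_f$ with $\Omega_{g_i}\circ T_i$ cleanly, and checking that the fibrewise isometry assumption on $\Phi_i$ is the exact ingredient that places the image of $T_i$ inside the open bounded convex set $C^k(\overline{\varphi_i(U_i)},B_\delta(0))$ needed as input to Lemma \ref{lemma loc omega}. Once this local-to-global dictionary is set up, the analytic work is already supplied by the local $\Omega$-lemma and the rest is the chain rule plus Proposition \ref{proposition 1}.
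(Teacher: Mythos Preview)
Your proposal is correct and follows essentially the same route as the paper: factor $R_i^F\circ\Omega_f=\Omega_{g_i}\circ T_i$ (the paper draws this as a commutative diagram with $R_i$ and $\tilde R_i$), invoke the local $\Omega$-lemma to get $\Omega_{g_i}\in C^r$, and then apply Proposition \ref{proposition 1}; the derivative formula is obtained in both cases by differentiating this factorisation and using \eqref{eq4}. Your write-up is slightly more explicit about the openness of $\Gamma_{C^k}(E)^U$ and about why the fibrewise isometry hypothesis forces $T_i$ to land in $C^k(\overline{\varphi_i(U_i)},B_\delta(0))$, but the argument is the same.
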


A different version of the global $\Omega$-lemma can be found in \cite[Theorem 5.9]{Gl}. (Note that in \cite[Theorem 5.9]{Gl} it is a requirement that the considered map $f$ maps the zero element of each fiber onto itself, $f(0_x)=0_x$. This makes it problematic to apply \cite[Theorem 5.9]{Gl} in our setting, since we will consider maps of the form $v\mapsto \textup{exp}_{g(p)}^{-1}\circ \textup{exp}_{f(p)}v$.)

\begin{remark} \
\begin{enumerate}
		\item Note that in the situation of Lemma \ref{lemma glob omega} ii), the statement
		\[g_i\in C^k(\overline{\varphi_i(U_i)\times B_\delta(0)},\mathbb{R}^d)\text{ and } D^j_2g_i\in C^k(\overline{\varphi_i(U_i)\times B_\delta(0)},L^j_s(\mathbb{R}^m,\mathbb{R}^d))\hspace{3em}\]
		for $j=0,\ldots,r$
		is equivalent to the statement that
		\[\partial^\alpha_y\partial^\beta_xg_i\colon\varphi_i(U_i)\times B_\delta(0)\rightarrow\mathbb{R}^d\]
		are continuous and continuously extendable to $\overline{\varphi_i(U_i)\times B_\delta(0)}$ for all $|\alpha|\le k+r$, $|\beta|\le k$, s.t. $|\alpha+\beta|\le k+r$, where $x$ denotes the ``$\varphi_i(U_i)$-direction'' and $y$ denotes the ``$B_\delta(0)$-direction''.
		\item The assumptions of Lemma \ref{lemma glob omega} ii) imply in particular that $\Omega_f$ is well-defined as a map: from ii) we see that $pr_2\circ\tilde{\Phi}_i\circ f\colon U\cap E|_{U_i}\rightarrow\mathbb{R}^d$ is $C^k$. It follows that $f(v)=\tilde{\Phi}_i^{-1}\circ (\pi,pr_2\circ\tilde{\Phi}_i)(v)$ for all $v\in U\cap E|_{U_i}$,  where $\pi\colon E\rightarrow M$ is the projection of $E$, so $f\in C^k(U\cap E|_{U_i},F)$. Since the $U_i$ cover $M$, we have $f\in C^k(U,F)$ and thus $f\circ s\in \Gamma_{C^k}(F)$ for all $s\in \Gamma_{C^k}(E)^U$.
\end{enumerate}
\end{remark}

\begin{proof}[Proof of Lemma \ref{lemma glob omega}]
For each $i=1,\ldots,l$ we have a commutative diagram
\[
\begin{xy}
  \xymatrix{
      \Gamma_{C^k}(E)^U \ar[r]^{\Omega_f} \ar[dd]_{R_i}    &   \Gamma_{C^k}(F)  \ar[rr]^{\tilde{R}_i} && C^k(\overline{\varphi_i(U_i)},\mathbb{R}^d)  \\ \\
      C^k(\overline{\varphi_i(U_i)},B_\delta(0)) \ar[rrruu]_{\Omega_{g_i}}   
  }
\end{xy}
\]
where $R_i(s):=pr_2\circ\Phi_i\circ s\circ\varphi_i^{-1}$, $\tilde{R}_i(s):=pr_2\circ\tilde{\Phi}_i\circ s\circ\varphi_i^{-1}$, and $\Omega_{g_i}(h)=g\circ (id\times h)$. From Proposition \ref{proposition 1} we see that $\Omega_f$ is $C^r$ iff $\tilde{R}_i\circ\Omega_f$ is $C^r$. Moreover,  $\tilde{R}_i\circ\Omega_f=\Omega_{g_i}\circ R_i$ is $C^r$ because of Lemma \ref{lemma loc omega}, thus $\Omega_f$ is $C^r$.

Equation \eqref{eq2} can be shown by differentiating the above commutative diagram and using equation \eqref{eq4} from Lemma \ref{lemma loc omega}.
\end{proof}

\begin{theorem}[$C^k(M,N)$ as a Banach manifold]\label{theorem ckmn banach mf} Let $k\in\mathbb{N}$. Let $M$ and $N$ be manifolds without boundary. Let $M$ be compact and let $N$ be connected. Choose a Riemannian metric $h$ on $N$. Then the topological space $C^k(M,N)$ (i.e., the set $C^k(M,N)$ equipped with the compact-open $C^k$ topology) has the structure of a smooth Banach manifold such that the following holds: for any $f\in C^k(M,N)$ and any small enough open neighborhood $U_f$ of $f$ in $C^k(M,N)$ there is an open neighborhood $V_f$ of the zero section in $\Gamma_{C^k}(f^*TN)$ such that the map
	\begin{align*}
	\varphi_f\colon U_f&\to V_f,\\
	g&\mapsto \textup{exp}^{-1}\circ (f,g),
	\end{align*}
	i.e., $\varphi_f(g)(p)=(\textup{exp}_{f(p)})^{-1}g(p)$ for all $g\in U_f$, $p\in M$, is a local chart (in particular a smooth diffeomorphism). Note that the inverse of $\varphi_f$ is given by $$\varphi_f^{-1}(s)(p)=\textup{exp}_{f(p)}s(p)$$ for all $s\in V_f$, $p\in M$. This smooth structure does not depend on the choice of the Riemannian metric $h$ on $N$. Moreover, for all $f,g\in C^k(M,N)$ s.t. $U_{f}\cap U_{g}\neq\varnothing$ it holds that
\begin{align}\label{eq 111}
	\left(D(\varphi_g\circ\varphi_f^{-1})_{s_0}s\right)(p)=D(\textup{exp}_{g(p)}^{-1}\circ \textup{exp}_{f(p)})_{s_0(p)}s(p)
\end{align}
for all $p\in M$, $s_0\in\varphi_f(U_{f}\cap U_{g})$, $s\in\Gamma_{C^k}(f^*TN)$.
\end{theorem}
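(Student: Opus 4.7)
The proof combines Lemma~\ref{lemma charts c^kMN are homeo} with the global $\Omega$-lemma (Lemma~\ref{lemma glob omega}). Lemma~\ref{lemma charts c^kMN are homeo} already gives that each $\varphi_f\colon U_{f,\varepsilon}\to\varphi_f(U_{f,\varepsilon})\subset\Gamma_{C^k}(f^*TN)$ is, for sufficiently small $\varepsilon>0$, a homeomorphism onto an open subset of the Banach space $\Gamma_{C^k}(f^*TN)$; every $f\in C^k(M,N)$ lies in its own chart domain, so these sets cover $C^k(M,N)$. To upgrade this to a smooth atlas, the key step is to show that whenever $U_f\cap U_g\neq\varnothing$, the transition map $\varphi_g\circ\varphi_f^{-1}$ is $C^r$ for every $r\in\mathbb{N}$.

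\textbf{Smoothness of transitions.} On $\varphi_f(U_f\cap U_g)$ the transition has the explicit form
\[
(\varphi_g\circ\varphi_f^{-1})(s)(p)=\textup{exp}_{g(p)}^{-1}\textup{exp}_{f(p)}s(p).
\]
After possibly shrinking $U_f$, I introduce the fiber-preserving bundle map
\[
\tau_{f,g}\colon W\subset f^*TN\to g^*TN,\qquad (f^*TN)_p\ni v\mapsto\textup{exp}_{g(p)}^{-1}\textup{exp}_{f(p)}v,
\]
on a suitable open subset $W\subset f^*TN$, so that $\varphi_g\circ\varphi_f^{-1}=\Omega_{\tau_{f,g}}$. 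I then apply Lemma~\ref{lemma glob omega} with $E=f^*TN$, $F=g^*TN$, and arbitrary $r\in\mathbb{N}$. In local trivializations coming from charts of $N$, the associated function $g_i(x,v)$ is a composition of the smooth maps $\textup{exp}$, $\textup{exp}^{-1}$, smooth trivialization maps, and the $C^k$ maps $f\circ\varphi_i^{-1}$ and $g\circ\varphi_i^{-1}$ — crucially, the latter two enter only through the $x$-variable. Consequently the mixed derivatives $\partial_v^\alpha\partial_x^\beta g_i$ exist and extend continuously to the closure whenever $|\beta|\le k$, which (by the remark following Lemma~\ref{lemma glob omega}) is exactly the required hypothesis. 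Hence $\Omega_{\tau_{f,g}}$ is $C^r$ for every $r\in\mathbb{N}$, so the transition map is smooth.

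\textbf{Assembly, metric independence, derivative formula.} The atlas $\{(U_f,\varphi_f)\}$ thus defines a smooth Banach manifold structure on $C^k(M,N)$, and Lemma~\ref{lemma charts c^kMN are homeo} ensures that the induced topology coincides with the compact-open $C^k$ topology. If $h'$ is a second Riemannian metric on $N$ with exponential map $\textup{exp}'$, the ``change of metric'' map $\varphi_f^{h'}\circ(\varphi_f^h)^{-1}$ is of the form $\Omega_\sigma$ for the fiber-preserving map $\sigma(v)=(\textup{exp}'_{f(p)})^{-1}\textup{exp}^h_{f(p)}v$, which is smooth by the same application of Lemma~\ref{lemma glob omega}; hence the two atlases are $C^\infty$-compatible. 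The derivative formula~\eqref{eq 111} is immediate from equation~\eqref{eq2} of Lemma~\ref{lemma glob omega} applied to $\tau_{f,g}$, since $D(\tau_{f,g}|_{(f^*TN)_p})_{s_0(p)}=D(\textup{exp}_{g(p)}^{-1}\circ\textup{exp}_{f(p)})_{s_0(p)}$.

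\textbf{Main obstacle.} The delicate point is verifying the asymmetric differentiability hypothesis of Lemma~\ref{lemma glob omega} for $\tau_{f,g}$: one must arrange the trivializations so that $f$ and $g$ appear only through the base variable, so that their limited $C^k$-regularity is consumed entirely by $\partial_x^\beta$ with $|\beta|\le k$, while the unlimited fiber derivatives fall only on smooth factors. This is what allows one to obtain $C^r$ for arbitrary $r$ from $f,g$ of mere regularity $C^k$, and it is also what makes a book-keeping of domains (shrinking the $U_f$ and defining $W$ so that every section one needs to plug in actually lands in $W$) necessary but routine.
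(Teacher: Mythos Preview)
Your approach is essentially the paper's: Lemma~\ref{lemma charts c^kMN are homeo} for the homeomorphism part, then write the transition as $\Omega_F$ for the fiber-preserving map $F=\tau_{f,g}$ and apply the global $\Omega$-lemma, exploiting that $f,g$ enter only through the base variable so that the $C^k$-regularity is consumed by $\partial_x^\beta$ while the fiber derivatives see only smooth data; the derivative formula and metric independence then follow from~\eqref{eq2}.

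One point you dismiss as ``routine'' is handled with more care in the paper and deserves a warning. For $\tau_{f,g}(v)=\textup{exp}_{g(p)}^{-1}\textup{exp}_{f(p)}v$ to be defined on a neighborhood of $\varphi_f(U_f\cap U_g)$ one needs, roughly, $\delta^f\le\delta^g$ (so that $\textup{exp}_{f(p)}v$ lands inside the ball where $\textup{exp}_{g(p)}^{-1}$ is defined). The paper stresses that this must be achieved by \emph{interchanging} $f$ and $g$ if necessary, not by shrinking $\delta^f$; the latter would make $\delta^f$ depend on $g$, so the chart domains would no longer form an honest atlas indexed by $f$ alone. As a consequence the paper only obtains smoothness of $\varphi_g\circ\varphi_f^{-1}$ in one direction directly, and then invokes the Inverse Mapping Theorem (using the pointwise invertibility visible in~\eqref{eq 111}) to conclude that the transition is a diffeomorphism. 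Your sketch implicitly treats the two directions symmetrically; be aware that the domain bookkeeping is exactly where this symmetry breaks and an extra argument is needed.
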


\begin{proof}For $f\in C^k(M,N)$ we denote by $U_{f,\varepsilon}$ the set defined in Lemma \ref{lemma charts c^kMN are homeo}. First we show that for $U_{f,\varepsilon^f}\cap U_{g,\varepsilon^g}\neq \varnothing$ the transition map $\varphi_g\circ\varphi_f^{-1}$ is smooth. We use a strategy similar to the proofs of Lemma \ref{lemma charts c^kMN are homeo} i)-ii). To be more precise, we first show the statement holds for sets $U_{f,\varepsilon^f}$ with some additional assumptions on the charts in the definition of $U_{f,\varepsilon^f}$. We will call these sets $U_{f,\varepsilon^f}^\text{add.}$. Then we show that we don't need these additional assumptions, provided that $\varepsilon^f>0$ and $\varepsilon^g>0$ are small enough. We start by defining the sets $U_{f,\varepsilon^f}^{\text{add.}}$, that is, we formulate which additional assumptions we make on the charts in the definition of $U_{f,\varepsilon^f}$.

Let $f\in C^k(M,N)$. Choose charts $(U_i^f,\varphi_i^f)$ of $M$, $i=1,\ldots,l=l(f)$, $\bigcup_{i=1}^lU_i^f=M$, s.t. $\overline{U_i^f}\subset M$ is compact, $\overline{U_i^f}\subset\tilde{U}_i^f$, $(\tilde{U}_i^f,\varphi_i^f)$ is still a chart of $M$, $f(\overline{U_i^f})\subset V_i^f$, $(V_i^f,\psi_i)$ chart of $N$, $\overline{V_i^f}\subset N$ is compact, $\overline{V_i^f}\subset\tilde{V}_i^f$, $\overline{\tilde{V}_i^f}\subset N$ is compact, and $(\tilde{V}_i^f,\hat{\Phi}_i^f)$ is a local trivialization of $TN$ which is an isometry on fibers for $i=1,\ldots,l$. Choose $$0<r^f<\min_{i=1,\ldots,l(f)}\inf_{q\in \overline{V_i^f}}\textup{inj}_q(N)$$ s.t.
$E$ is a diffeomorphism from the set
\[X_i^f:=\{v\in TN\text{ }|\text{ } \pi_{TN}(v)\in V_i^f, \text{ }\|v\|_h<r^f\}\]
onto its image. Denote by $(f^{-1}(\tilde{V}_i^f),\Phi_i^f)$ the local trivialization of $f^*TN$ induced by $(\tilde{V}_i^f,\hat{\Phi}_i^f)$.
 Now we define the set
\[U_{f,\varepsilon^f}^{\text{add.}}:=\bigcap_{i=1}^l\mathcal{N}^k(f,\varphi_i^f,\tilde{U}_i^f,\psi_i^f,V_i^f,\overline{U_i^f},\varepsilon^f)\]
where $\varepsilon^f =\varepsilon^f(\delta^f)>0$ is chosen s.t. Lemma \ref{lemma charts c^kMN are homeo} i)-iii) hold (for $U_{f,\varepsilon^f}^{\text{add.}}$) where $$\delta^f<\frac{r^f}{6}.$$

Assume that $U_{f,\varepsilon^f}^{\text{add.}}\cap U_{g,\varepsilon^g}^{\text{add.}}\neq\varnothing$. Define 
	\[U:=\{v\in f^*TN\text{ }|\text{ } \|v\|_{f^*h}<2\delta^f\}\]
and
\[F\colon U\rightarrow g^*TN\]
by 
\[F(v):=\left((\textup{exp}_{g(p)})^{-1}\circ\textup{exp}_{f(p)}\right)(v)\] 
for $v\in U\cap T_{f(p)}N$.

After possibly interchanging the roles of $f$ and $g$, we may assume
\begin{align}\label{eq 11111}
\delta^f\le \delta^g.
\end{align}
(It is important to note here, that \eqref{eq 11111} is achieved by possibly interchanging $f$ and $g$. It is \textit{not} achieved by choosing $\delta^f$ so small, that \eqref{eq 11111} holds. The latter would mean that $\delta^f$ also depends on $g$ and then some of the arguments below no longer work.) 
Then
\begin{align} \label{eqqqq 3}
	F(v)=E|_{X_j^g}^{-1}(g(p),\textup{exp}_{f(p)}v)
\end{align}
for all $v\in U\cap T_{f(p)}N$, where $p\in U_j^g$. Hence, $F$ is well-defined. To show \eqref{eqqqq 3}, let $v\in U\cap T_{f(p)}N$ and $p\in U_j^g$. Then $\textup{exp}_{f(p)}v\in B_{2\delta^f}(f(p))$. Since $U_{f,\varepsilon^f}^{\text{add.}}\cap U_{g,\varepsilon^g}^{\text{add.}}\neq\varnothing$, we have $d(f(p),g(p))<\delta^f+\delta^g$ by the triangle inequality. Therefore, $\textup{exp}_{f(p)}v\in B_{3\delta^f+\delta^g}(g(p))$. Since $\delta^f\le \delta^g$ we have $3\delta^f+\delta^g\le 4\delta^g<\frac46 r^g<\frac46\textup{inj}_{g(p)}(N)$. From this it is easy to see that \eqref{eqqqq 3} holds.

Now we want to use Lemma \ref{lemma glob omega} to show that
\begin{align*}
\Omega_F\colon \Gamma_{C^k}(f^*TN)^U&\rightarrow \Gamma_{C^k}(g^*TN),\\
s&\mapsto F\circ s,
\end{align*}
is (well-defined and) smooth. If we have shown that, then we have in particular that $\varphi_g\circ\varphi_f^{-1}=\Omega_F|_{\varphi_f(U_{f,\varepsilon^f}^{\text{add.}}\cap U_{g,\varepsilon^g}^{\text{add.}})}$ is smooth. Condition i) of Lemma \ref{lemma glob omega} is satisfied, as $F$ is fiber-preserving. Now we show Condition ii): To that end, we consider the maps
\[g_{ij}\colon pr_2\circ \Phi_j^g\circ F\circ (\Phi_i^f)^{-1}\circ\left((\varphi_i^f)^{-1},id\right)\colon \varphi_i^f(U_i^f\cap U_j^g)\times B_{2\delta^f}(0)\rightarrow \mathbb{R}^n,\]
where $n=\textup{dim}(N)$ and the maps
\[H_{ij}\colon Y_{ij}\rightarrow TN\]
where $Y_{ij}$ is the non-empty open set
\[Y_{ij}:=\{(q_1,q_2,y)\in V_i^f\times V_j^g\times B_{2\delta^f}(0)\text{ }|\text{ } q_2\in B_{2\delta^f+\delta^g}(q_1)\}\]
and
\[H_{ij}(q_1,q_2,y):=\left((\textup{exp}_{q_2})^{-1}\circ \textup{exp}_{q_1}\right)\left((\hat{\Phi}_i^f)^{-1}(q_1,y)\right).\]
Note that $H_{ij}$ is well-defined and smooth (on $Y_{ij}$) since under our assumption \eqref{eq 11111} it holds that
\[H_{ij}(q_1,q_2,y)=E|_{X_j^g}^{-1}(q_2,\textup{exp}_{q_1}((\hat{\Phi}_i^f)^{-1}(q_1,y)))\]
on $Y_{ij}$.

Moreover, we have
\begin{align}\label{eq6}
pr_2\circ\Phi_j^g\circ H_{ij}\circ\left((f,g)\circ(\varphi_i^f)^{-1},id\right)=g_{ij}
\end{align}
on $\varphi_i^f(U_i^f\cap U_j^g)\times B_{2\delta^f}(0)$. Given any multiindex $\alpha$ we see from \eqref{eq6} that
\[\partial^\alpha_yg_{ij}(x,y)=(pr_2\circ\Phi_j^g)\left(\left(\partial^\alpha_yH_{ij}\right)\left(f((\varphi_i^f)^{-1}(x)),g((\varphi_i^f)^{-1}(x)),y)\right)\right)\]
for all $(x,y)\in \varphi_i^f(U_i^f\cap U_j^g)\times B_{2\delta^f}(0)$, so $\partial^\alpha_yg_{ij}$ is $C^k$ in $(x,y)$. In particular, for $|\beta|\le k$, we have that $\partial^\beta_x\partial^\alpha_y g_{ij}$ is continuous on $\overline{\varphi_i^f(U_i^f\cap U_j^g)\times B_{\delta^f}(0)}$. We have shown Conditions i) and ii) of Lemma \ref{lemma glob omega}, which we now apply to deduce that $\Omega_F\colon \Gamma_{C^k}(f^*TN)^U\rightarrow \Gamma_{C^k}(g^*TN)$ is smooth. In particular, $\varphi_g\circ\varphi_f^{-1}=\Omega_F|_{\varphi_f(U_{f,\varepsilon^f}^{\text{add.}}\cap U_{g,\varepsilon^g}^{\text{add.}})}$ is smooth.

Next we show that we don't need the additional assumptions we made in the definition of the sets $U_{f,\varepsilon^f}^{\text{add.}}$. For arbitrary $U_{f,\varepsilon}$ and $U_{g,\tilde{\varepsilon}}$ (defined as in Lemma \ref{lemma charts c^kMN are homeo}) there exist $U_{f,\varepsilon^f}^{\text{add.}}$ and $U_{g,\tilde{\varepsilon}^g}^{\text{add.}}$ with $U_{f,\varepsilon}\subset U_{f,\varepsilon^f}^{\text{add.}}$ and $U_{g,\tilde{\varepsilon}}\subset U_{g,\tilde{\varepsilon}^g}^{\text{add.}}$, provided that $\varepsilon>0$ and $\tilde{\varepsilon}>0$ are small enough (see Lemma \ref{lemma weak top properties}). If $U_{f,\varepsilon}\cap U_{g,\tilde{\varepsilon}}\neq \varnothing$, then we have in particular $U_{f,\varepsilon^f}^{\text{add.}}\cap U_{g,\tilde{\varepsilon^g}}^{\text{add.}}\neq \varnothing$. We have shown that the transition map $\varphi_g\circ\varphi_f^{-1}$ is smooth on $\varphi_f(U_{f,\varepsilon^f}^{\text{add.}}\cap U_{g,\tilde{\varepsilon^g}}^{\text{add.}})$, so it is in particular smooth on $\varphi_f(U_{f,\varepsilon}\cap U_{g,\tilde{\varepsilon}})$.

Equation \eqref{eq 111} is a direct consequence of equation \eqref{eq2}.

Summing up, we have shown that for $f,g\in C^k(M,N)$ with $U_{f,\varepsilon}\cap U_{g,\tilde{\varepsilon}}\neq\varnothing$ ($\varepsilon, \tilde{\varepsilon}>0$ small enough), the transition map
\[\varphi_g\circ\varphi_f^{-1}\colon \varphi_f(U_{f,\varepsilon}\cap U_{g,\tilde{\varepsilon}})\to \varphi_g(U_{f,\varepsilon}\cap U_{g,\tilde{\varepsilon}})\]
is smooth \textit{after possibly interchanging the roles of $f$ and $g$}, c.f. \eqref{eq 11111}. Hence, we did not show yet that $\varphi_g\circ\varphi_f^{-1}$ is a diffemorphism. Equation \eqref{eq 111} yields that the differential of $\varphi_g\circ\varphi_f^{-1}$ at an arbitrary $s_0\in \varphi_f(U_{f,\varepsilon}\cap U_{g,\tilde{\varepsilon}})$ is a bijective, continuous linear map (between Banach spaces), hence it is a linear isomorphism. The Inverse Mapping Theorem (see e.g. \cite[Theorem 2.5.2]{MTAA}) yields that $\varphi_g\circ\varphi_f^{-1}$ is a diffeomorphism.

A similar argument can be used to show that the above smooth structure does not depend on the choice of the Riemannian metric $h$ on $N$. 
\end{proof}
\begin{remark} For our proofs it was a crucial fact that $M$ is compact. If $M$ is non-compact, the ansatz of using the exponential map of the target manifold to construct the charts still makes sense, but the question of how to topologize $C^k(M,N)$ arises. The case $k=\infty$ was worked out in \cite{KM}.
	
Also, one can consider infinite dimensional target $N$. In the case that $N$ is a Banach manifold admitting partitions of unity, one can work with sprays to construct the charts \cite{Blue}.	
\end{remark}

Lastly, we want to state and prove some mapping properties which can be found in e.g. \cite{Blue}.
\begin{proposition}[Mapping properties] Let $k,r\in\mathbb{N}$. Let $M, N, A, Z$ be manifolds without boundary. Assume that $M$ and $A$ are compact. Moreover, assume that $N$ and $Z$ are connected. Then the following holds:
	\begin{enumerate}
		\item If $g\in C^{k+r}(N,Z)$, then the map
		\begin{align*}
			\omega_g\colon C^k(M,N)&\to C^k(M,Z),\\
			f&\mapsto g\circ f,
		\end{align*}
		is $C^r$.
		\item If $g\in C^k(A,M)$, then the map
		\begin{align*}
		\alpha_g\colon C^k(M,N)&\to C^k(A,N),\\
		f&\mapsto f\circ g,
		\end{align*}
		is smooth.		
	\end{enumerate}
\end{proposition}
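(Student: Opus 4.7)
The strategy in both parts is to fix $f_0$ in the source, write the map in the charts $\varphi_{f_0}$ and $\varphi_{g\circ f_0}$ (resp.\ $\varphi_{f_0\circ g}$) given by Theorem \ref{theorem ckmn banach mf}, and identify the chart representation either as an instance of the global $\Omega$-lemma or as a bounded linear operator between Banach spaces. Since these charts are smooth diffeomorphisms, any regularity verified for the chart representation transfers back to $\omega_g$ (resp.\ $\alpha_g$).

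For part (i), fix $f_0\in C^k(M,N)$ and compute, for $s$ in a small open neighborhood of $0$ in $\Gamma_{C^k}(f_0^*TN)$,
\[(\varphi_{g\circ f_0}\circ\omega_g\circ\varphi_{f_0}^{-1})(s)(p)=(\textup{exp}_{g(f_0(p))})^{-1}\bigl(g(\textup{exp}_{f_0(p)}s(p))\bigr).\]
This is of the form $\Omega_F$ where $F\colon U\to (g\circ f_0)^*TZ$ is the fiber-preserving bundle map defined on an open tubular neighborhood $U$ of the zero section of $f_0^*TN$ by $F(v):=(\textup{exp}_{g(f_0(p))})^{-1}(g(\textup{exp}_{f_0(p)}v))$ for $v\in (f_0^*TN)_p$. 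The plan is to apply Lemma \ref{lemma glob omega} with the lemma's fiber-derivative parameter set to $r$, to conclude that $\Omega_F$, and hence $\omega_g$, is $C^r$. The main task is to verify hypothesis (ii) for $F$: each fiber derivative $\partial_y$ acting on a local representative $g_i$ of $F$ brings down (via the chain rule) at most one further derivative of $g$ composed with smooth exponential maps, so $\partial_y^\alpha g_i$ encodes at most $|\alpha|$ derivatives of $g$. Asking in addition that $\partial_y^\alpha g_i$ be of class $C^k$ on the closure (for $|\alpha|\le r$) then requires at most $k+r$ total derivatives of $g$, which is precisely the hypothesis $g\in C^{k+r}$.

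For part (ii), fix $f_0\in C^k(M,N)$ and identify $(f_0\circ g)^*TN\cong g^*(f_0^*TN)$ canonically. A direct computation gives
\[(\varphi_{f_0\circ g}\circ\alpha_g\circ\varphi_{f_0}^{-1})(s)(a)=(\textup{exp}_{f_0(g(a))})^{-1}(\textup{exp}_{f_0(g(a))}s(g(a)))=s(g(a)),\]
so in charts $\alpha_g$ is the pullback $s\mapsto s\circ g$, a linear map $\Gamma_{C^k}(f_0^*TN)\to\Gamma_{C^k}((f_0\circ g)^*TN)$. Continuity of this linear pullback reduces, via local trivializations, to the statement that precomposition by a fixed $C^k$ map is bounded on $C^k$-norms, which follows from the chain rule (a precomposition analogue of Lemma \ref{lemma Absch Ck norm verknuepfung}, as cited inside the proof of Lemma \ref{lemma charts c^kMN are homeo}). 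Bounded linear maps between Banach spaces are smooth, so $\alpha_g$ is smooth. The principal obstacle is the derivative bookkeeping in part (i): verifying in detail that the fiber-direction derivatives of $F$ combined with at most $k$ further derivatives in $(x,y)$ use up exactly the $k+r$ derivatives of $g$ available by hypothesis.
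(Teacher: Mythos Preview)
Your proposal is correct and follows essentially the same approach as the paper: for (i) you pass to the chart representation, recognize it as an $\Omega_F$ for a fiber-preserving $F$, and invoke the global $\Omega$-lemma with the derivative-counting observation that each fiber derivative costs one derivative of $g$; for (ii) you observe that in charts the map becomes the linear pullback $s\mapsto s\circ g$, whose continuity reduces (via trivializations) to boundedness of precomposition on $C^k$-norms. Your sketch is in fact slightly more explicit than the paper's own proof, which merely points to Lemma~\ref{lemma glob omega} for (i) and to the evident linearity and continuity of $\tilde\alpha_g\colon f\mapsto f\circ g$ on $C^k(\overline U,Y)$ for (ii).
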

\begin{proof} To show assertion i) of the proposition, one uses the local charts from Theorem \ref{theorem ckmn banach mf} to write down the local representative of $\omega_g$ to which Lemma \ref{lemma glob omega} is applied.
	
	 Assertion ii) of the proposition is shown in a similar manner. First, one uses the charts of $C^k(M,N)$ to reduce to the situation of maps between sections of vector bundles. Using local trivializations of these vector bundles, the problem is further reduced to the following statement: Let $Y$ be a Banach space. Let $U\subset \mathbb{R}^n$ and $V\subset\mathbb{R}^m$ be open and bounded. Given $g\in C^k(\overline{V}, U)$, the map
		\begin{align*}
		\tilde{\alpha}_g\colon C^k(\overline{U},Y)&\to C^k(\overline{V},Y),\\
		f&\mapsto f\circ g,
		\end{align*}
		is smooth. This however is clear, since $\tilde{\alpha}_g$ is linear and continuous.	
\end{proof}

\subsection*{Acknowledgments}The author would like to thank Bernd Ammann and Olaf Müller for interesting discussions about this topic. We thank Peter Michor and Alexander Schmeding for providing useful references and comments. The author's work was supported by the DFG Graduiertenkolleg GRK 1692 ``Curvature, Cycles, and Cohomology''.

\newpage

\bibliographystyle{abbrv}
\bibliography{references}

\end{document}